\documentclass[runningheads]{llncs}
\pdfoutput=1


\usepackage{amssymb}  	
\usepackage{mathrsfs}	
\usepackage[all]{xy}	



\usepackage{graphicx}

\usepackage[ruled,vlined,linesnumbered]{algorithm2e}
\usepackage{empheq}
\usepackage{blkarray,bigstrut}
\usepackage{tikz}
\usepackage{algpseudocode}

\usepackage{cleveref}
\crefname{algocf}{alg.}{algs.}
\Crefname{algocf}{Algorithm}{Algorithms}



\newcommand{\ie}{\mbox{\it i.e.}}
\newcommand{\etal}{\mbox{\it et al.}}




\newcommand{\void}[1]{}				


\newcommand{\rr}{\mathbb R}

\newcommand{\zz}{\mathbb Z}

\newcommand{\Lap}[1]{\mathcal{L}}

\newcommand{\thisway}{$(\Longrightarrow)$}
\newcommand{\thatway}{$(\Longleftarrow)$}

\renewenvironment{proof}
{\noindent{\it Proof:}\hspace{.5em}\rm}
{\hfill$\square$

\vspace{0.1em}}

\newenvironment{proofof}[1]			
	{\begin{trivlist}\item[]{\it Proof of #1:\hspace{.5em}}\rm}
	{\hfill$\square$
	\end{trivlist}}
	{\begin{trivlist}\item[]{\it Proof sketch:\hspace{.5em}}\rm}
	{\hfill$\square$
	\end{trivlist}}

\numberwithin{equation}{section}

\newcommand{\U}{{\mathcal{U}}}
\renewcommand{\L}{{\mathcal{L}}}
\newcommand{\upperbd}[2]{\beta^+(\U_{#1, #2})}
\newcommand{\lowerbd}[2]{\beta^-(\L_{#1, #2})}
\newcommand{\upperbdpath}[2]{\mathcal{U}_{#1,#2}}
\newcommand{\lowerbdpath}[2]{\mathcal{L}_{#1,#2}}

\newcommand{\bm}{\boldsymbol}

\newcommand{\dist}[2]{\Pi(#2)-\Pi(#1)}
\newcommand{\nedge}[0]{null-edge}

\newcommand{\diagincmat}{Robinson matrix}

\newcommand{\bma}{\bm a}
\newcommand{\bmb}{\bm b}
\newcommand{\bmc}{\bm c}
\newcommand{\bmd}{\bm d}

\newcommand{\calS}{\mathcal S}

\newcommand{\edge}[2]{\{#1, #2\}}
\newcommand{\reverse}[1]{#1^{\leftarrow}}
\newcommand{\rev}{^{\leftarrow}}
\newcommand{\bmD}{\mathbb D^k}

%

\begin{document}
\title{Uniform Embeddings for Robinson Similarity Matrices\thanks{Supported by an NSERC Discovery grant.}}
\titlerunning{Uniform embeddings for Robinson matrices}
%
\author{Jeannette Janssen\inst{1} \and
Zhiyuan Zhang\inst{1}}
\authorrunning{J. Janssen and Z. Zhang }
%
\institute{Dalhousie University, Halifax, NS, Canada.\\
\email{\{Jeannette.Janssen, owen.zhang\}@dal.ca} }
\maketitle              
\begin{abstract}
A Robinson similarity matrix is a symmetric matrix where all entries in all rows and columns are increasing towards the diagonal.
A Robinson matrix can be decomposed into the weighted sum of $k$ adjacency matrices of a nested family of unit interval graphs. We study the problem of finding an embedding which gives a \emph{simultaneous} unit interval embedding for all graphs in the family. This is called a \emph{uniform embedding.}
We give a necessary and sufficient condition for the existence of a uniform embedding, derived from paths in an associated graph. We also give an efficient combinatorial algorithm to find a uniform embedding or give proof that it does not exist, for the case where $k=2$.   

\keywords{Robinson similarity \and unit interval graph \and proper interval graph \and indifference graph.}
\end{abstract}

\section{Introduction}

In many different settings it occurs that a linearly ordered set of data items is given, together with a pair-wise similarity measure of these items, with the property that items are more similar if they are closer together in the ordering. A classic example of this setting is in archaeology, where sites are ordered according to their age, and the composition of the items found at the sites are more similar if the sites are closer in age. Other applications occur in evolutionary biology, sociology, text mining, and visualization. (See \cite{liiv2010seriation} for an overview.) The similarity between such an ordered set of items, when presented in the form of a matrix, will have the property that entries in each row and column increase towards the diagonal (when items are closer in the ordering), and decrease away from the diagonal. Such a matrix is called a  Robinson matrix, or Robinson similarity matrix. 

Formally, a {\em Robinson matrix} is a symmetric matrix where the entries $a_{i,j}$ satisfy the following condition:
\begin{equation}\label{robinson:intro}
\text{For all $u<v<w$, } a_{u,v}\geq a_{u,w}\text{ and }a_{v,w}\geq a_{u,w}.
\end{equation}
In other words, a Robinson matrix is an asymmetric matrix where entries in each row and column are increasing towards the diagonal. See Example 1 (to follow) for examples.
Robinson matrices are named after Robinson, who first mentioned such matrices in \cite{robinson} in the context of archaeology. 

Robinson wanted to solve the following question, that is also referred as the {\em seriation problem}: suppose a set of objects has an underlying linear order and given their pair-wise similarity, arrange the objects so that objects that are closer in the arrangement are more similar than pairs that are further apart. That is,
 find the ordering of the items for which the similarity values form a Robinson matrix. However, seriation only gives a {\sl linear ordering} of the items. In this paper we focus on finding a linear representation of the items that takes into account the numerical value of the similarity. In the context of archaeology, this would mean that we are looking not only the order of the sites in terms of their age, but also of some indication of their age. 

We assume similarities cannot be judged very precisely, and thus we focus on Robinson matrices where the entries are taken from a restricted set $
\{ 0,1,2\dots, k\}$, where $k$ indicates ``very similar", and 0 indicates ``not at all similar". We will assume throughout that all diagonal entries equal $k$. We are then looking for a linear embedding of the items so that the distance between pairs with the same similarity value are approximately similar.  More precisely, we require that there exist threshold distances $d_1>d_2>\dots >d_k>0$ and an embedding of the items into $\rr$, such that, if  a pair of items has similarity level $t$, then the distance between their embedded values lies between threshold distances $d_{t+1}$ and $d_t$. We will call this a \emph{uniform embedding}; See Definition \ref{def:uniform embedding} for a formal definition.

A $\{ 0,1\}$-valued symmetric matrix $A$ is Robinson if and only if $A-I$ is the adjacency matrix of a \emph{proper interval graph} \cite{kendall1969incidence}. 
The class of proper interval graphs equals the class of unit interval graphs
 \cite{roberts1969indifference}, which equals the class of \emph{indifference graphs}. A graph is an indifference graph if and only if there exists a linear embedding of the vertices with respect to a threshold distance $d>0$ so that two vertices are adjacent if and only if their embedded values have distance at most $d$. An indifference graph embedding is therefore a uniform embedding for the associated binary Robinson matrix.

A Robinson matrix taking values in $\{ 0,1,\dots ,k\}$ can be seen as the representation of a nested family of indifference graphs. Namely, any such matrix $A=(a_{u,v})$ can be written as
$A =  \sum_{t = 1}^k A^{(t)},$ 
where for all $t\in [k]$, $A^{(t)}=(a^{(t)}_{u,v})$ is a binary matrix such that, $a^{(t)}_{u,v}=1$ if $a_{u,v}\geq t$, and 0 otherwise. Clearly, each $A^{(t)}$ is Robinson and has all ones on the diagonal. Therefore $A^{(t)}-I$ is the adjacency matrix of an indifference graph $G^{(t)}$. These graphs are nested, i.e. for all $t<k$, $G^{(t+1)}$ is a subgraph of $G^{(t)}$. In this light, our problem can be restated as that of finding a \emph{simultaneous} indifference graph embedding for all graphs $G^{(t)}$.

As shown in the following example, not every Robinson matrix has a uniform embedding. \begin{example}\label{example:counter-example}
Consider the following matrices 
$$
A = \left[\begin{array}{ccccc}
 2& 2& 1& 0& 0 \\ 
 2& 2& 2& 1& 1 \\
 1& 2& 2& 2& 1 \\
0& 1& 2& 2& 2 \\
0& 1& 1& 2& 2 \\
\end{array}\right]
\qquad
B = \left[
\begin{array}{cccccc}
 2& 2& 1& 0& 0& 0 \\
2& 2& 2& 1& 1& 1 \\
1& 2& 2& 2& 1& 1 \\
0& 1& 2& 2& 2& 1 \\
0& 1& 1& 2& 2& 2 \\
0& 1& 1& 1& 2& 2 \\
\end{array}
\right]
$$ 
Matrix $A$ has a linear embedding $\Pi$ with threshold distances $d_1=8, d_2=6$ given by
\[
\Pi = \path{0, 5, 6.5, 11.75, 12.75}.
\]
We can check that, for any pair $(i,j)$, if  $a_{i,j}=2$ then $\Pi(i)$, $\Pi(j)$ have distance at most 6, if $a_{i,j}=1$ then the distance between $\Pi(i)$ and $\Pi(j)$ lies between 6 and 8, and if  $a_{i,j}=0$ then this distance is greater than 8.

In contrast, matrix $B$ does not have a uniform embedding. Suppose there exists such an embedding $\Pi$ with threshold distances $d_1> d_2>0$. Suppose also that $\Pi$ is increasing; see Theorem 1 for the justification. 
Since $b_{1,4}=0$ and $b_{4,6}=1$, we have that $\dist 14> d_1$ and $\dist 46>d_2$. This implies that  $\dist16 > d_1 + d_2$. 
On the other hand, we have that $b_{1,2}=2$ and $b_{2,6}=1$, so
$\dist12\leq d_2$ and  $\dist26\leq d_1$. This implies that $\dist16\leq d_1+d_2$. 
Combining the inequalities results in  $d_1 + d_2<\dist16\leq d_1+d_2$,  a contradiction. 

\end{example}

In this paper, we consider the  problem of finding a uniform embedding of a \diagincmat{} or giving proof that it does not exist. In Theorem \ref{thm:main theorem} we give a condition for the existence of a uniform embedding in terms of the threshold distances $d_1,\dots , d_k$. We then show that this condition is sufficient by giving an algorithm to find a uniform embedding, given threshold distances that meet the condition. We will also compare the complexity of verifying the given condition with the complexity of solving the inequality system defining a uniform embedding (Definition \ref{def:uniform embedding}).

Finally, we consider the case where $k=2$, so the problem is to find a uniform embedding for two nested indifference graphs. We give a combinatorial algorithm to either find a uniform embedding or give a substructure that shows it does not exist. The algorithm has complexity $O(N^{2.5})$, where $N=n(n-1)/2$ is the size of the input.

\subsection{Related Works}
In \cite{chuangpishit2017uniform}, the problem of finding a uniform embedding was studied for \emph{diagonally increasing graphons}. A graphon is a symmetric function $w:[0,1]^2\rightarrow [0,1]$. Graphons can be seen as generalizations of matrices. The results from \cite{chuangpishit2017uniform} do not apply in the context of this paper. Namely, a matrix can be represented as a graphon, but the ``boundaries" delineating the regions of $[0,1]^2$ where the graphon takes a certain value $1\leq t\leq k$ in this case are piecewise constant functions. The results from  \cite{chuangpishit2017uniform} apply only to boundary functions that are continuous and strictly increasing. 

In \cite{roberts1969indifference}, Roberts established the equivalence between 
the classes of unit interval graphs, proper interval class, indifference graphs, and {\em claw-free} interval graphs. Different proofs can be found in  \cite{bogart1998short} and 
\cite{gardi07}.

A lot of work has been done on the seriation problem. For binary matrices, this is equivalent to recognizing proper (or unit) interval graphs. 
Corneil \cite{corneil20043-sweep} gives a linear time unit interval recognition algorithm, which improves on \cite{corneil1995simple}. 
Atkins \etal \cite{atkins1998spectral} gave  a spectral algorithm for the general seriation problem.  
Laurent and Seminaroti  \cite{laurent2017lex,laurent2017similarity}  give a combinatorial algorithm that generalizes the algorithm from  \cite{corneil20043-sweep}. A general overview of the seriation problem and its applications can be found in \cite{liiv2010seriation}.

\section{Uniform Embeddings}\label{chapter: the main result}

We start by formally defining a uniform embedding. First we introduce some notation. For an integer $t\in \zz_+$, let $[t]=\{1,2,\dots ,t\}$. Let $\mathcal{S}^n[k]$ denote the set of all Robinson matrices with entries in $\{ 0,1,2,\dots ,k\}$, and let $\bmD$ be the set of threshold vectors  $\bmD = \{\bmd\in\rr^k:\bmd= (d_i)_{i\in[k]}, d_1>\dots> d_k>0\}.$

\begin{definition}\label{def:uniform embedding}
Given a matrix \emph{$A\in \mathcal{S}^n[k]$ and a threshold vector $\bmd \in\bmD $}, a map $\Pi:[n]\to \rr$ is a \emph{uniform embedding of $A$ with respect to $\bmd$} if, for each pair $u,v\in[n]$:
\begin{equation}\label{eqn:shortcut}
    a_{u,v} = t\iff d_{t+1}<|\Pi(v)-\Pi(u)|\leq d_{t} \quad\text{ for }t\in\{0,\dots, k\},
\end{equation}
where we define $d_{k+1}=-\infty$ and $d_0=\infty$, so that the lower bound for $a_{u,v}=k$ and the upper bound for $a_{u,v}=0$ are trivially satisfied. 
    \end{definition}

The following theorem states that, if a uniform embedding exists, then we may assume it to have certain nice properties. The proof is straightforward but technical, and has been omitted here; it can be found in the Appendix. 

\begin{theorem}\label{thm:monotone}
Let $A\in \mathcal{S}^n[k]$. If $A$ has a uniform embedding, then there exist  $\bmd\in\bmD$ and a uniform embedding $\Pi$ with respect to $\bmd$ which is strictly monotone increasing, and which is such that the inequalities \ref{eqn:shortcut} are all strict. That is,
for all pairs of $u,v\in[n]$, $u<v$,
\begin{equation}\label{eqn:shortcut2}
    a_{u,v} = t\iff d_{t+1} < \dist uv < d_t,
\end{equation}
where $d_{k+1}=0$ and $d_0=\infty$.
\end{theorem}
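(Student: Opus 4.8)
The plan is to take an arbitrary uniform embedding $\Pi_0$ of $A$ with respect to some $\bmd^0\in\bmD$ and to post-process it in three stages: making all inequalities strict, making it injective, and making it monotone. The first two stages are the ``straightforward'' part. For strictness, note that for each level $t$ the set of realized distances $D_t=\{\,|\Pi_0(v)-\Pi_0(u)| : a_{u,v}=t\,\}$ lies in $(d^0_{t+1},d^0_t]$, so the bands $D_0,\dots ,D_k$ are pairwise separated, with a gap straddling each $d^0_t$; choosing a new vector $\bmd\in\bmD$ with $d_t$ strictly inside the gap between $D_t$ and $D_{t-1}$ (keeping $d_k>0$ and interpolating through any empty band) turns every inequality in \ref{eqn:shortcut} into a strict one. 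For injectivity, observe that if $\Pi_0(u)=\Pi_0(v)$ then $|\Pi_0(u)-\Pi_0(w)|=|\Pi_0(v)-\Pi_0(w)|$ for every $w$, so rows $u$ and $v$ of $A$ coincide and $a_{u,v}=k$; hence each fibre of $\Pi_0$ is a set of mutually indistinguishable indices, and one may spread each fibre over an interval shorter than the smallest gap produced in the first stage without leaving any band. After these two stages the lower bound at level $k$ has become a strict inequality of the form $|\Pi_0(v)-\Pi_0(u)|>0$, matching the convention $d_{k+1}=0$ of \ref{eqn:shortcut2}.

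Being injective, $\Pi_0$ now induces a linear order $\tau$ on $[n]$, and the threshold structure forces $\tau$ to be a Robinson order of $A$: for $\tau(i),\tau(j),\tau(\ell)$ with $i<j<\ell$ the distances from $\tau(i)$ to $\tau(j)$ and from $\tau(j)$ to $\tau(\ell)$ are smaller than that from $\tau(i)$ to $\tau(\ell)$, and (by the strict thresholds) a larger distance corresponds to a smaller-or-equal entry, so $a_{\tau(i),\tau(j)}\ge a_{\tau(i),\tau(\ell)}$ and $a_{\tau(j),\tau(\ell)}\ge a_{\tau(i),\tau(\ell)}$. Thus the identity order and $\tau$ are both Robinson orders, and $\Pi_0$ is monotone with respect to $\tau$; what remains is to produce a uniform embedding monotone with respect to the identity. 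I would do this through the modular decomposition tree of $A$ (equivalently, a PQ-tree): it is known that the Robinson orders of $A$ are exactly the arrangements consistent with this tree --- at each internal node one independently either reverses an orientation (if the quotient there is prime, hence rigid up to reversal) or permutes the children (if the quotient is degenerate, i.e.\ has constant cross-similarities) --- and the identity order differs from $\tau$ only in these local choices.

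Each local change must then be realized by a geometric move on the embedding that keeps it a uniform embedding monotone with respect to the new order. Flipping the orientation at a node $N$ is accomplished by reflecting the whole block $B_N$ about the midpoint of its span and then reflecting each child block about its own midpoint: every reflection is an isometry of $\rr$, so within-block distances are unchanged, each block keeps its run of positions, and --- since $B_N$ and each of its children are strong modules --- all distances from outside a block stay inside the single band that module requires, so the same $\bmd$ still works. The remaining move, permuting the children of a degenerate node, is the technical heart and the step I expect to cost the most care: the children carry a common cross-similarity $c$ but need not be geometrically alike, so they cannot simply be translated past one another; instead one rescales their sub-embeddings to sit in tiny equal-width windows and reassigns the (rigidly translated) windows in the desired order, which keeps every cross-child distance inside the band of $c$ and every within-child distance unchanged but forces the thresholds to be re-chosen. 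Making this precise --- equivalently, building the monotone embedding directly by a bottom-up recursion on the modular decomposition tree --- is where the hypothesis that $A$ has a uniform embedding is actually used, since it is what guarantees that the prime quotients met along the way are themselves uniformly embeddable; once a monotone embedding is obtained, the strictness stage is re-run to finish.
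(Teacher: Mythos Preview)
Your route differs substantially from the paper's. The paper reduces to the case of no repeated rows (passing to the index set $I$ of first occurrences), asserts a lemma that for such a matrix \emph{every} uniform embedding is already strictly monotone, perturbs the embedding to make the inequalities strict, and then re-inserts the repeated rows near their representatives. Your Stage~2 is essentially this last step done earlier, and your Stage~1 (sliding the thresholds into the gaps between the realised distance bands) is a clean alternative to the paper's $\epsilon$-perturbation of the embedding.

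The substantive divergence is Stage~3. The paper's monotonicity lemma would render your PQ-tree machinery unnecessary --- but that lemma, as stated, is actually false: for
\[
A=\begin{pmatrix}2&2&1&1\\2&2&2&1\\1&2&2&1\\1&1&1&2\end{pmatrix},\qquad \Pi=(3,\,1,\,0,\,5.6),\qquad (d_1,d_2)=(6,\,2.5),
\]
all six threshold conditions hold, the rows are pairwise distinct, yet $\Pi$ is neither increasing nor decreasing (the set $\{1,2,3\}$ is a module and the induced order reverses it). So your instinct that the identity order and the $\Pi$-induced order can genuinely differ, and that one must navigate between them, is well founded.

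Your Q-node flip is correct: after the two reflections each child is merely translated, the block and every child keep their spans, and since the extreme points of each child-span are actual vertex positions, the band attached to any module contains the whole interval of distances from an outside vertex to that span --- hence also all cross-child distances after the move. The P-node move, however, needs repair. Rescaling cannot leave ``within-child distance unchanged'', and assigning different scale factors to different children would force incompatible threshold vectors. A simpler device works with the \emph{same} $\bmd$ throughout: at a degenerate node every union of children is again a module, so an adjacent transposition of two children is realised by exactly your reflect--then--reflect-back move applied to their union; composing adjacent transpositions yields any desired permutation. With this correction Stage~3 goes through without re-choosing thresholds, and the final re-run of Stage~1 is unnecessary.
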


Note that the definition of $d_{k+1}$ in Theorem \ref{thm:monotone} has changed from $-\infty$ to zero; this change enforces that $\Pi$ is \emph{strictly} increasing. In the later context, we will assume that any uniform embedding $\Pi$ of matrix $A$ with respect to any $\bmd$ is a  map $\Pi:[n]\rightarrow \rr$ which satisfies \cref{eqn:shortcut2} (with the new definition of $d_{k+1}$).  This will simplify the proofs and reduce the need to distinguish different cases. By Theorem \ref{thm:monotone}, we can make this assumption without loss of generality.

 \section{Bounds, Walks, and Their Concatenation}\label{sec:Bounds and walks}

The contradiction for matrix $B$ in the Example \ref{example:counter-example} was derived from a cyclic sequence of vertices, namely $\path{1,4,6, 2,1}$, and the bounds on the distance between successive pairs of this sequence. In this section, we show how walks in an associated graph generate a set of bounds which need to be satisfied by any linear embedding.

\begin{definition}\label{def:bound}
Let $A\in\mathcal{S}^n[k]$ and fix $u,v\in[n]$.
A vector $\bmb\in\zz^k$ is an {\em upper bound on $(u,v)$} if 
the inequality $\dist uv < \bmb^\top\bmd$ is implied by the inequality system (\ref{eqn:shortcut2}) in the sense that, for any uniform embedding $\Pi$ and threshold vector $\bmd\in\bmD$ satisfying inequality system (\ref{eqn:shortcut2}), it holds that $\dist uv< \bmb^\top\bmd$.
Similarly, the vector  $\bmb$ is a {\em lower bound on $(u,v)$} if the inequality $\bmb^\top\bmd<\dist uv$ is implied by (\ref{eqn:shortcut2}). 

\end{definition}

It follows directly from inequality system (\ref{eqn:shortcut2}) that, for any matrix $A\in\mathcal{S}^n[k]$, and any pair $u,v\in[n]$, $ u<v$, the all-zero vector $\bm 0$ is a lower bound on $(u,v)$. 
Note that, if $\bmb$ is an upper bound on $(u,v)$, then $-\bmb$ is a lower bound on $(v,u)$, and vice versa.

We will see how new bounds can be obtained from walks in a corresponding graph. 
An original set of bounds, derived from edges, can be obtained directly from inequality system (\ref{eqn:shortcut2}). This is made precise in the following definition.
Let $\bm\chi_i\in\zz^k$ denote the unit vector with $1$ at the $i$th position and zero otherwise.

\begin{definition}\label{def:bounds}
Let $A=(a_{i,j})\in\mathcal{S}^n[k]$. 
Let $u,v\in [n]$, $u<v$. Define
$$
 \beta^+(u,v)= \left\{\begin{array}{ll}
    \bm\chi_{t} & \text{if }a_{u,v}=t\geq 1; \\  
    \text{undefined } &\text{if } a_{u,v}=0.
\end{array}\right.
$$
and
$$\beta^-(u,v) = 
\left\{\begin{array}{ll}
    \bm\chi_{t+1} & \text{if }a_{u,v}=t\leq k-1; \\  
    \bm0 &\text{if } a_{u,v}=k.
\end{array}\right.
$$
\end{definition}

It follows immediately from \Cref{def:bounds} and inequality system (\ref{eqn:shortcut2}) that, for any $u,v\in [n]$, $u<v$, $\beta^+(u,v)$ is a lower bound on $(u,v)$ (if $a_{u,v}\not=0$), and $\beta^-(u,v)$ is a lower bound on $(u,v)$.

We can consider $\path{u,v}$ as a walk of length 1 in the complete graph 
with vertex set $[n]$. If $u<v$, then $\beta^+(u,v)$ is the upper bound defined by this walk, and $\beta^-(u,v)$ is the lower bound. We now extend this notion to unordered pairs, and, more generally, to longer walks. To accommodate the fact that there is no upper bound on $\Pi(v)-\Pi(u)$ if $a_{u,v}=0$, we distinguish \emph{edges} in this graph which are pairs $\{u,v\}$ such that $a_{u,v}>0$, and \emph{null-edges} which are pairs $\{ u,v\}$ so that $a_{u,v}=0$. In the following we will see that we can combine walks to obtain more bounds. 

\begin{definition}\label{def:bound-walk}
A \emph{$(u,v)$-walk} is a sequence $W=\path{w_0,w_1,\dots ,w_p}$ where $w_i\in [n]$, $0\leq i\leq p$, and $u=w_0$ and $v=w_p$. In other words, $W$ is a walk in the complete graph with vertex set $[n]$. The walk $W$ is an \emph{upper-bound-walk} if for all $1\leq i\leq p$,
$$ \{ w_{i-1},w_i\} \text{ is }\left\{
\begin{array}{ll}
    \text{an edge } & \text{if } w_{i-1}<w_{i},  \\
    \text{an edge or a \nedge{} } & \text{if }w_{i-1}>w_{i}.
\end{array}
\right. 
$$ 
In other words, in an upper-bound-walk, null-edges are only traversed from larger to smaller vertices. 
Similarly, the walk $W$ is a \emph{lower-bound-walk} if null-edges are only traversed to go from smaller to larger vertices. 

Now first define for all $u,v\in [n]$ so that $u<v$,  
\begin{equation}\label{def:beta vu}
\beta^+(v,u)=-\beta^-(u,v)\mbox{ and }\beta^-(v,u)=-\beta^+(u,v).
\end{equation}
Then for any walk $W=\path{w_0, w_1,\dots , w_p}$, define
\begin{eqnarray}
\beta^+ (W)&=\sum_{i=1}^p \beta^+(w_{i-1},w_i)&\mbox{if }W\mbox{ is an upper-bound-walk, and}\label{eqn:beta(W)}\\
\beta^- (W)&=\sum_{i=1}^p \beta^-(w_{i-1},w_i)&\mbox{if }W\mbox{ is a lower-bound-walk.}\label{eqn:betamin(W)}
\end{eqnarray}

Given two walks $W_1 = \path{u_0,\dots, u_s}$ and $W_2 = \path{u_s, \dots, u_p}$, denote $W = W_1 + W_2 = \path{u_0, \dots, u_p}$ as the {\em concatenation} of $W_1$ and $W_2$.
\end{definition}

For any walk $W=\path{w_0,w_1,\dots ,w_{p-1},w_p}$, define the \emph{reverse} of $W$ as $W\rev=\path{w_p,w_{p-1},\dots , w_1, w_0}$. Clearly, if $W$ is an upper-bound-walk, then $W\rev$ is a lower-bound-walk, and vice versa. By (\ref{def:beta vu}), (\ref{eqn:beta(W)}) and (\ref{eqn:betamin(W)}), we have that  $\beta^+(W)=-\beta^-(W\rev)$.

\begin{lemma}\label{lemma: concatenation}
Let $A\in \mathcal{S}^n[k]$.
For any $u,v\in [n]$ and any $(u,v)$-walk $W$, if $W$ is an upper-bound-walk then $\beta^+(W)$ is an upper bound on $(u,v)$, and if $W$ is a $(u,v)$-lower-bound-walk then $\beta^-(W)$ is a lower bound on $(u,v)$.
\end{lemma}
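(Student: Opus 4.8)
The natural approach is induction on the length $p$ of the walk $W=\path{w_0,\dots,w_p}$. The base case $p=1$ is essentially the content of the remark following Definition \ref{def:bounds}, extended to unordered pairs via the defining equations (\ref{def:beta vu}): if $w_0<w_1$ then $\beta^+(w_0,w_1)=\bm\chi_t$ (with $t=a_{w_0,w_1}\geq 1$) is a lower bound on $(w_0,w_1)$ — wait, more precisely it is an upper bound in the sense that $\dist{w_0}{w_1}<\beta^+(w_0,w_1)^\top\bmd = d_t$, which is exactly inequality system (\ref{eqn:shortcut2}); and if $w_0>w_1$ and $\{w_0,w_1\}$ is an edge or a null-edge, then $\beta^+(w_0,w_1)=-\beta^-(w_1,w_0)$ is an upper bound on $(w_0,w_1)$ because $\beta^-(w_1,w_0)$ is a lower bound on $(w_1,w_0)$ (when $\{w_0,w_1\}$ is a null-edge, $a_{w_1,w_0}=0$ so $\beta^-(w_1,w_0)=\bm\chi_1$ and the bound $\dist{w_1}{w_0}>d_1$ holds; when it is an edge the bound is $\dist{w_1}{w_0}>d_{t+1}$), and negating reverses the inequality and swaps the endpoints. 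The lower-bound-walk case is symmetric, and in fact follows from the upper-bound case together with the identity $\beta^+(W)=-\beta^-(W\rev)$ noted just before the lemma, since $W\rev$ is an upper-bound-walk whenever $W$ is a lower-bound-walk.

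For the inductive step, I would write $W = W' + \path{w_{p-1},w_p}$ where $W'=\path{w_0,\dots,w_{p-1}}$. The key observation is that if $W$ is an upper-bound-walk then so is its prefix $W'$ (the condition in Definition \ref{def:bound-walk} is imposed edge-by-edge, so it is inherited by any contiguous subwalk), and by definition $\beta^+(W)=\beta^+(W')+\beta^+(w_{p-1},w_p)$. By the induction hypothesis, $\dist{w_0}{w_{p-1}} < \beta^+(W')^\top\bmd$ for every uniform embedding $\Pi$ and every $\bmd\in\bmD$ satisfying (\ref{eqn:shortcut2}); and by the base case applied to the single step $\path{w_{p-1},w_p}$, $\dist{w_{p-1}}{w_p} < \beta^+(w_{p-1},w_p)^\top\bmd$. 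Adding these two inequalities and using $\dist{w_0}{w_p} = \big(\Pi(w_p)-\Pi(w_{p-1})\big) + \big(\Pi(w_{p-1})-\Pi(w_0)\big)$ gives $\dist{w_0}{w_p} < \big(\beta^+(W')+\beta^+(w_{p-1},w_p)\big)^\top\bmd = \beta^+(W)^\top\bmd$, which is what we need. The lower-bound-walk case is handled identically (or, again, by reversing and invoking the upper-bound case via $\beta^-(W)=-\beta^+(W\rev)$).

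The main thing to be careful about — and the only real obstacle — is the bookkeeping around orientation and the distinction between edges and null-edges, i.e. making sure the single-step base case is stated correctly in all four sub-cases ($w_{i-1}<w_i$ vs.\ $w_{i-1}>w_i$, crossed with edge vs.\ null-edge) and that the walk-type condition in Definition \ref{def:bound-walk} is exactly what makes each step's bound valid. In particular one must check that the prohibited configurations — traversing a null-edge from small to large in an upper-bound-walk — are precisely the ones for which no finite $\beta^+$ bound exists, so that the restriction in the definition is not merely convenient but necessary for the argument. Once the base case is pinned down, the inductive step is just additivity of telescoping differences and is entirely routine; no further obstacles arise.
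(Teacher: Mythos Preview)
Your proposal is correct and follows essentially the same approach as the paper: induction on the length of the walk, decomposing $W$ into the prefix $W'=\path{w_0,\dots,w_{p-1}}$ and the final step $\path{w_{p-1},w_p}$, then adding the two resulting inequalities telescopically. The paper isolates the fact that sub-walks of an upper-bound-walk are again upper-bound-walks (and that $\beta^+$ is additive under concatenation) as a separate preliminary lemma, whereas you fold this observation directly into the inductive step; otherwise the arguments coincide, including the treatment of the lower-bound case by symmetry/reversal.
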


The proof of this lemma follows by induction on the length of the walk, using the definitions. It can be found in the Appendix.

\section{A Sufficient and Necessary Condition}\label{sec:main_thm}

\Cref{sec:Bounds and walks} introduced the necessary concepts to state the main theorem of this paper. We saw in the previous section that upper- and lower-bound-walks give bounds that must be satisfied by any uniform embedding. This hints at a condition for the existence of a uniform embedding: there must exist $\bmd\in\bmD$ so that each lower bound derived from a $(u,v)$-walk is smaller than each upper bound derived from a $(u,v)$-walk. 

As it turns out, we only need to consider lower- and upper-bound-paths, that is, walks without repeated vertices. 
Given a matrix $A\in \mathcal{S}^n[k]$, let $\lowerbdpath uv$ be the set of all $(u,v)$-lower-bound-paths, and $\upperbdpath uv$ be the set of all $(u,v)$-upper-bound-paths. Note that $\upperbdpath uv$ and $\lowerbdpath uv$ are finite, whereas the set of all walks is infinite. 

Define the inequality system:\\
For all $u,v\in [n]$, $u<v$, for any upper bound $\bmb=\beta^+(W_1)$ where $W_1\in \upperbdpath uv$ and any lower bound $\bma=\beta^-(W_2)$ where $W_2\in\lowerbdpath uv$,
\begin{equation}\label{eqn:condition_d}
 \bma^\top\bmd<\bmb^\top\bmd.
\end{equation}

\begin{theorem}\label{thm:main theorem}
A \diagincmat{} $A\in \mathcal{S}^n[k]$ has a uniform embedding if and only if there exists $\bmd\in\bmD$ satisfying inequality system (\ref{eqn:condition_d}).
\end{theorem}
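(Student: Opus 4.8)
The plan is to prove both implications; the forward direction is immediate and the converse carries all the content. If $A$ has a uniform embedding, then by Theorem~\ref{thm:monotone} there are $\bm d\in\bm D$ and a strictly increasing uniform embedding $\Pi$ satisfying the strict system (\ref{eqn:shortcut2}); then for every $u<v$, every $W_1\in\upperbdpath uv$ and every $W_2\in\lowerbdpath uv$, Lemma~\ref{lemma: concatenation} gives $\beta^-(W_2)^\top\bm d<\Pi(v)-\Pi(u)<\beta^+(W_1)^\top\bm d$, which is exactly (\ref{eqn:condition_d}).

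For the converse, fix $\bm d$ satisfying (\ref{eqn:condition_d}). I would first reduce to the case $a_{i,i+1}\ge 1$ for all $i$: otherwise the Robinson property forces $A$ to split into diagonal blocks with zero cross entries, (\ref{eqn:condition_d}) then holds within each block (no upper-bound-walk can cross a block boundary, so the cross pairs impose no constraint), and embeddings of the blocks may be translated arbitrarily far apart. Under this assumption $\langle u,u{+}1,\dots,v\rangle$ witnesses $\upperbdpath uv\neq\emptyset$ for all $u<v$. The crux is the claim that every closed upper-bound-walk $C$ satisfies $\beta^+(C)^\top\bm d\ge 0$. I would prove this by induction on the length of $C$: a closed walk with an internal repeated vertex splits at that vertex into two shorter closed upper-bound-walks whose $\beta^+$-values sum to $\beta^+(C)$; otherwise $C$ is a cycle, and after reindexing it cyclically to begin at its smallest vertex $m$ — say $C=\langle m,c_1,\dots,c_{p-1},m\rangle$ with $c_1,\dots,c_{p-1}$ distinct and all exceeding $m$ — the single edge $\langle m,c_1\rangle$ is an upper-bound-path and $\langle c_1,\dots,c_{p-1},m\rangle$ is a path whose reverse is an $(m,c_1)$-lower-bound-path, so applying (\ref{eqn:condition_d}) to the ordered pair $m<c_1$ (together with $\beta^-(W^{\leftarrow})=-\beta^+(W)$) gives $\beta^+(C)^\top\bm d>0$.

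Given the claim, let $\mathcal P$ be the polyhedron of maps $\Pi$ with $\Pi(1)=0$ and $\beta^-(u,v)^\top\bm d\le \Pi(v)-\Pi(u)\le\beta^+(u,v)^\top\bm d$ for all $u<v$ (the upper inequality present only when $a_{u,v}\ge1$). These are difference constraints whose constraint digraph has the upper-bound-walks as its directed walks and $\beta^+(\cdot)^\top\bm d$ as walk length; by the claim it has no negative cycle, so $\mathcal P\neq\emptyset$, and shortest-path duality gives $\sup_{\Pi\in\mathcal P}(\Pi(v)-\Pi(u))=\min_{W\in\upperbdpath uv}\beta^+(W)^\top\bm d$ and $\inf_{\Pi\in\mathcal P}(\Pi(v)-\Pi(u))=\max_{W\in\lowerbdpath uv}\beta^-(W)^\top\bm d$ for all $u<v$. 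Feeding the single-edge path $\langle u,v\rangle$ into (\ref{eqn:condition_d}) in the role of $W_1$ (legal as an upper-bound-path when $a_{u,v}\ge1$), and again in the role of $W_2$ (always legal as a lower-bound-path), shows $\max_{W\in\lowerbdpath uv}\beta^-(W)^\top\bm d<\beta^+(u,v)^\top\bm d$ and $\min_{W\in\upperbdpath uv}\beta^+(W)^\top\bm d>\beta^-(u,v)^\top\bm d$; hence $\mathcal P$ contains, for each $u<v$, a point with $\Pi(v)-\Pi(u)$ strictly below its upper constraint (when present) and a point with $\Pi(v)-\Pi(u)$ strictly above its lower constraint. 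Averaging these finitely many points and using convexity of $\mathcal P$ (every point of $\mathcal P$ already satisfies the non-strict inequalities) yields $\Pi^*\in\mathcal P$ with $\beta^-(u,v)^\top\bm d<\Pi^*(v)-\Pi^*(u)<\beta^+(u,v)^\top\bm d$ for all $u<v$; that is, $\Pi^*$ satisfies (\ref{eqn:shortcut2}) with respect to $\bm d$, and in particular is a uniform embedding of $A$.

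I expect the main obstacle to be the passage from the non-strict to the \emph{strict} inequalities. Once the no-negative-cycle claim is in hand, feasibility of the non-strict difference system is routine; the delicate point is that (\ref{eqn:condition_d}), read with a single edge playing the role of one of its two bounding walks, rules out any individual constraint being tight throughout $\mathcal P$, after which the convex-combination step upgrades ``each strict inequality is attainable'' to ``all strict inequalities hold at once''. The other technical kernel is the inductive claim itself — in particular the reindexing of a cycle at its minimum vertex, which is exactly what lets one edge become an upper-bound-path while the complementary arc becomes a lower-bound-path, so that (\ref{eqn:condition_d}) can be applied.
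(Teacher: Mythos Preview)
Your proof is correct. The forward direction and the key cycle lemma coincide with the paper's (your reindexing of a cycle at its minimum vertex is a clean variant of the split in Lemma~\ref{lemma:ub cycle geq 0}). For the converse you take a genuinely different route: the paper constructs $\Pi$ explicitly by the iterative midpoint formula $\Pi(v)=(ub_v+lb_v)/2$ of (\ref{alg:embedding algorithm})--(\ref{equi:ubv and lbv}) and verifies the strict inequalities directly (Lemmas~\ref{lemma:lb<ub} and~\ref{theorem:correctness of Pi}), which then feeds into the algorithm of Section~5; you instead cast the non-strict version of (\ref{eqn:shortcut2}) as a system of difference constraints, use the no-negative-cycle criterion for feasibility together with shortest-path duality to identify $\sup_{\mathcal P}(\Pi(v)-\Pi(u))$ and $\inf_{\mathcal P}(\Pi(v)-\Pi(u))$ with the optimal path bounds, and then average finitely many witnesses to obtain all strict inequalities simultaneously. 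This is shorter and more conceptual but non-constructive. Your block reduction for the case $a_{i,i+1}=0$ is a sensible addition---and is in fact needed in your argument for the shortest-path suprema to be finite---though one phrasing is slightly off: upper-bound-walks \emph{can} cross a block boundary from a higher-indexed block to a lower one; what matters, and what is true, is that they cannot cross and return, so walks between same-block endpoints stay in that block and cross-block pairs have $\upperbdpath uv=\emptyset$, making (\ref{eqn:condition_d}) vacuous there.
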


We can prove the necessity of \Cref{thm:main theorem} without other tools

\begin{proofof}{the forward implication of \Cref{thm:main theorem}}
Suppose $A$ has a uniform embedding. By Theorem \ref{thm:monotone}, this implies that $A$ has a uniform embedding $\Pi$ with respect to a  threshold vector $\bmd\in\bmD$ which satisfy inequality system (\ref{eqn:shortcut2}). Let $u,v\in[n]$ with $u<v$. Let $W_1\in\lowerbdpath uv$ and $W_2\in \upperbdpath uv$,  and let $\bma =\beta^-(W_1)$ and $\bmb=\beta^+(W_2)$. Then by Lemma \ref{lemma: concatenation} and Definition \ref{def:bound}, 
$\bma^\top\bmd<\Pi(v)-\Pi(u) < \bmb^\top\bmd.
$
\end{proofof}

For the converse, we will obtain an iterative procedure to construct a uniform embedding $\Pi$ which satisfies inequality system \cref{eqn:shortcut2}. However, first we need to prove that condition (\ref{eqn:condition_d}) for paths implies that the same condition holds for all walks. 

\subsection{Cycles and Paths}\label{sec:partial order}

In Section \ref{sec:Bounds and walks} we saw how walks can be used to generate new inequalities that are implied by the inequality system \ref{eqn:shortcut2}. 
In this section we show that, for the existence of a uniform embedding we need only to consider paths.

A $(u,v)$-upper-bound-walk $W=\path{u=w_0,w_1,\dots ,w_p=v}$ is an {\em upper-bound-cycle} if $u=v$. and $W$ contains no other repeated vertices. Note that the order in which the cycle is traversed determines whether or not it is an upper-bound-cycle.

\begin{lemma}\label{lemma:ub cycle geq 0}
Let $A\in\calS^n[k]$  and $\bmd\in\bmD$.
Let $C=\path{u_1,\dots, u_p}$, $u_1=u_p=u$, be an upper-bound-cycle.
If $\bmd\in\bmD$ satisfies \cref{eqn:condition_d}, then $\beta^+(C)^\top\bmd > 0$.
\end{lemma}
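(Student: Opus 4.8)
The plan is to turn the upper-bound-cycle $C$ into a comparison between a lower bound and an upper bound on a single ordered pair $(u,v)$ with $u<v$, so that \cref{eqn:condition_d} applies directly. The obstacle is that $C$ is a \emph{cycle}, not a $(u,v)$-path with $u<v$: the endpoints coincide and the vertices along $C$ need not be sorted. First I would pick the smallest vertex $u$ occurring on $C$; since the property of being an upper-bound-cycle is preserved under cyclic rotation of the base point (rotating does not change which edges are traversed or in which direction), I may assume $C=\path{u=u_1,u_2,\dots,u_p=u}$ with $u=\min\{u_1,\dots,u_p\}$. Next pick any other vertex $v$ on the cycle, say $v=u_j$ with $u<v$, and split $C$ at $v$ into two walks $W_1=\path{u_1,\dots,u_j}$ (a $(u,v)$-walk) and $W_2=\path{u_j,\dots,u_p}$ (a $(v,u)$-walk), so that $C=W_1+W_2$. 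By additivity of $\beta^+$ over concatenation (equation \ref{eqn:beta(W)}), $\beta^+(C)=\beta^+(W_1)+\beta^+(W_2)$, and since $W_2$ is a $(v,u)$-upper-bound-walk, $W_2\rev$ is a $(u,v)$-lower-bound-walk with $\beta^+(W_2)=-\beta^-(W_2\rev)$. Hence
\[
\beta^+(C)^\top\bmd = \beta^+(W_1)^\top\bmd - \beta^-(W_2\rev)^\top\bmd,
\]
so it suffices to show $\beta^-(W_2\rev)^\top\bmd < \beta^+(W_1)^\top\bmd$.

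The catch is that $W_1$ and $W_2\rev$ are \emph{walks}, not necessarily \emph{paths}, while \cref{eqn:condition_d} is stated only for paths. To handle this I would argue that, for the purpose of evaluating $\beta^\pm$ against a \emph{fixed} $\bmd$, every walk can be reduced to a path without increasing a lower bound or decreasing an upper bound. Concretely, if a $(u,v)$-upper-bound-walk $W$ contains a repeated vertex $w$, write $W=P_1+K+P_2$ where $K$ is a closed subwalk at $w$; then $K$ is itself an upper-bound-cycle (after rotation, with base point the minimum vertex of $K$), and $W'=P_1+P_2$ is a shorter $(u,v)$-upper-bound-walk with $\beta^+(W)=\beta^+(W')+\beta^+(K)$. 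So the reduction step is itself an instance of the statement being proved for the shorter cycle $K$ — which suggests the whole lemma should be proved by induction on the number of edges of $C$ (equivalently, a simultaneous induction with the claim ``for any $\bmd$ satisfying \cref{eqn:condition_d} and any upper-bound-walk $W$ from $u$ to $v$, there is an upper-bound-\emph{path} $P$ from $u$ to $v$ with $\beta^+(P)^\top\bmd\le\beta^+(W)^\top\bmd$,'' and dually for lower bounds).

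So the structure I would use: induct on $|E(C)|$. Base case: $C$ has length $2$, i.e. $C=\path{u,v,u}$ with $u<v$; then $\beta^+(C)=\beta^+(u,v)+\beta^+(v,u)=\beta^+(u,v)-\beta^-(u,v)$, and since $\beta^+(u,v)$ is a lower bound and $\beta^-(u,v)$ is also a valid lower bound with $\beta^-(u,v)^\top\bmd<\beta^+(u,v)^\top\bmd$ forced by \cref{eqn:condition_d} applied to the one-edge paths $\path{u,v}$ (using that the single edge, traversed $u\to v$, is both an upper-bound-path giving $\beta^+(u,v)$ and a lower-bound-path giving $\beta^-(u,v)$), we get $\beta^+(C)^\top\bmd>0$. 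Inductive step: take $C$ with minimum vertex $u$ as base point, split at another vertex $v$ as above into $W_1,W_2$; if either piece has a repeated vertex, excise a strictly shorter closed subwalk $K$ (an upper-bound-cycle after rebasing), apply the inductive hypothesis to get $\beta^+(K)^\top\bmd>0$, and replace $C$ by the shorter cycle $C\setminus K$ — it remains an upper-bound-cycle and $\beta^+(C)^\top\bmd = \beta^+(C\setminus K)^\top\bmd + \beta^+(K)^\top\bmd$, so it is enough to prove the claim for $C\setminus K$. Repeating, we reduce to the case where $W_1$ and $W_2\rev$ are genuine paths from $u$ to $v$ with $u<v$; then $\beta^+(W_1)$ is the upper bound of an element of $\upperbdpath uv$ and $\beta^-(W_2\rev)$ the lower bound of an element of $\lowerbdpath uv$, so \cref{eqn:condition_d} gives $\beta^-(W_2\rev)^\top\bmd<\beta^+(W_1)^\top\bmd$, whence $\beta^+(C)^\top\bmd>0$ by the displayed identity. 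The main thing to get right is the bookkeeping in the excision step — verifying that a closed subwalk of an upper-bound-walk, once rebased at its minimum vertex, is again an upper-bound-cycle (null-edges are still traversed only from larger to smaller), and that concatenation additivity \ref{eqn:beta(W)} is being applied to validly-typed walks at each stage.
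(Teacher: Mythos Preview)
Your core move---split $C$ at an internal vertex $v$ into $W_1+W_2$, reverse $W_2$ to obtain a $(u,v)$-lower-bound-walk, and invoke \cref{eqn:condition_d}---is exactly the paper's argument, and your device of rebasing $C$ at its minimum vertex is a clean way to guarantee $u<v$ (a point the paper's proof glosses over).

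However, the entire inductive apparatus you build to reduce walks to paths is superfluous, because you have overlooked a clause in the definition: in this paper an \emph{upper-bound-cycle} is defined (just above the lemma) to be a closed upper-bound-walk that \emph{contains no repeated vertices} other than $u_1=u_p$. Consequently, when you cut $C$ at any internal vertex $v$, both $W_1$ and $W_2^{\leftarrow}$ are automatically \emph{paths}, not merely walks, and \cref{eqn:condition_d} applies at once. The paper's proof is three lines for precisely this reason. (As a side remark, your induction as written has a small wrinkle anyway: the excised closed subwalk $K$ need not itself be an upper-bound-\emph{cycle} unless you are careful to excise between the first two occurrences of a repeated vertex---but this is moot once you use the definition correctly.)
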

\begin{proof}
Suppose $v=u_i\in C$ for some $1<i<p$, then $C=W_1 + W_2$ where 
$W_1=\path{u_1,\dots, u_i}$  and 
$W_2 = \path{u_i,\dots, u_p}$. Then $W_1$ is a $(u,v)$-upper-bound-path, and $W_2$ is a $(v,u)$-upper-bound-path, so $W_2\rev$ is a $(u,v)$-lower-bound-path.
Then by \Cref{def:bound-walk},
$$
\beta^+(C) = \beta^+(W_1)+\beta^+(W_2)=\beta^+(W_1)-\beta^-(W_2\rev).
$$
By the choice of $\bmd$, 
$\beta^-(W_2^{\rev} )^\top \bmd < \beta^+(W_1 )^\top \bmd$, and thus $\beta^+(C)^\top\bmd > 0$.

\end{proof}

\begin{lemma}\label{lemma:loopless}
Let $A\in\calS^n[k]$  be a \diagincmat{} and let $\bmd\in\bmD$, and suppose $\bmd$ satisfies (\ref{eqn:condition_d}). 
Suppose $W$ is a $(u,v)$-upper-bound-walk $W$. Then there exists a $(u,v)$-upper-bound-path $W'$ such that $\beta^+(W')^\top\bmd
\leq \beta^+(W)^\top \bmd$. If $W$ is a $(u,v)$-lower-bound-walk, then there exists a $(u,v)$-lower-bound-path $W'$ such that $\beta^-(W')^\top\bmd
\geq \beta^-(W)^\top \bmd$.
\end{lemma}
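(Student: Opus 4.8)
The plan is to prove the upper-bound statement by induction on the length $p$ of the walk $W=\path{w_0,\dots,w_p}$, the base case $p\le 1$ being trivial since then $W$ is already a path. For the inductive step, if $W$ has no repeated vertex it is itself the desired path $W'$; otherwise I will excise a closed sub-walk that is an upper-bound-cycle and invoke \Cref{lemma:ub cycle geq 0} to argue that this excision does not increase $\beta^+(\cdot)^\top\bmd$, after which the inductive hypothesis applied to the (strictly shorter) resulting walk finishes the argument. The lower-bound statement will then be deduced from the upper-bound statement by reversing walks, using $\beta^+(W)=-\beta^-(W\rev)$ and the fact that the reverse of an upper-bound-walk is a lower-bound-walk.

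In more detail: assume $W$ has a repeated vertex (and, deleting any immediate repetitions first, that no two consecutive vertices of $W$ coincide). Among all pairs $i<j$ with $w_i=w_j$ choose one minimizing $j-i$; then $C=\path{w_i,\dots,w_j}$ has no repeated vertex other than its endpoints, so it is a cycle, and since $W$ is an upper-bound-walk every step of $C$ satisfies the upper-bound-walk condition, making $C$ an upper-bound-cycle. Let $W'=\path{w_0,\dots,w_i,w_{j+1},\dots,w_p}$ be $W$ with $C$ removed; because $w_i=w_j$, the step from $w_i$ to $w_{j+1}$ in $W'$ is exactly the step $\{w_j,w_{j+1}\}$ of $W$, so $W'$ is again a $(u,v)$-upper-bound-walk, of length $p-(j-i)<p$. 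Since $\beta^+$ is a sum over consecutive pairs, $\beta^+(W)=\beta^+(W')+\beta^+(C)$, and \Cref{lemma:ub cycle geq 0} gives $\beta^+(C)^\top\bmd>0$, hence $\beta^+(W')^\top\bmd\le\beta^+(W)^\top\bmd$. By the inductive hypothesis there is a $(u,v)$-upper-bound-path $W''$ with $\beta^+(W'')^\top\bmd\le\beta^+(W')^\top\bmd\le\beta^+(W)^\top\bmd$, as required.

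For the lower-bound case, given a $(u,v)$-lower-bound-walk $W$, the reverse $W\rev$ is a $(v,u)$-upper-bound-walk; applying the already-proved part yields a $(v,u)$-upper-bound-path $P$ with $\beta^+(P)^\top\bmd\le\beta^+(W\rev)^\top\bmd$, and then $P\rev$ is a $(u,v)$-lower-bound-path with $\beta^-(P\rev)^\top\bmd=-\beta^+(P)^\top\bmd\ge-\beta^+(W\rev)^\top\bmd=\beta^-(W)^\top\bmd$. The only genuinely delicate point is the cycle-extraction step: one must verify that the minimal repeated-vertex sub-walk is a bona fide upper-bound-cycle (so that \Cref{lemma:ub cycle geq 0} actually applies) and that splicing out a contiguous closed segment preserves the upper-bound-walk property at the splice point; both follow from the minimality choice and the local nature of the upper-bound-walk condition, together with the harmless preprocessing that removes immediately repeated vertices.
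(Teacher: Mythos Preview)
Your proposal is correct and follows essentially the same approach as the paper: iteratively excise a closed sub-walk, use \Cref{lemma:ub cycle geq 0} to show the bound does not increase, and repeat until a path remains. Your version is in fact slightly more careful than the paper's, since you explicitly choose a repetition with minimal $j-i$ to guarantee the extracted segment is a genuine upper-bound-\emph{cycle} (no internal repeated vertices), which is what \Cref{lemma:ub cycle geq 0} formally requires; and you derive the lower-bound case cleanly by reversal rather than by the paper's ``similar argument.''
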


The proof follows easily from the previous lemma and the well-known fact that each walk can be transformed into a path by successively removing cycles. Details of the proof can be found in the Appendix. We now have the following corollary.

\begin{corollary}
Let $A\in\calS^n[k]$  be a \diagincmat{} and let $\bmd\in\bmD$. If $\bmd$ satisfies (\ref{eqn:condition_d}), then for every $u,v\in [n]$, for every $(u,v)$-upper-bound-walk $W_1$ and every $(u,v)$-lower-bound-walk $W_2$,
\begin{equation*}
 \beta^-(W_1)^\top \bmd<\beta^+(W_2)^\top\bmd.
\end{equation*}
\end{corollary}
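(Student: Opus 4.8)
The plan is to reduce this statement about arbitrary walks to the hypothesis (\ref{eqn:condition_d}), which only speaks about paths, by way of Lemma \ref{lemma:loopless}. So fix $u,v\in[n]$, a $(u,v)$-lower-bound-walk $W_1$, and a $(u,v)$-upper-bound-walk $W_2$ (this is the pairing for which $\beta^-(W_1)$ and $\beta^+(W_2)$ are defined, and it matches the convention of the forward-implication proof above). By Lemma \ref{lemma:loopless} there is a $(u,v)$-lower-bound-path $W_1'$ with $\beta^-(W_1')^\top\bmd\ge\beta^-(W_1)^\top\bmd$ and a $(u,v)$-upper-bound-path $W_2'$ with $\beta^+(W_2')^\top\bmd\le\beta^+(W_2)^\top\bmd$. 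Hence it suffices to prove $\beta^-(W_1')^\top\bmd<\beta^+(W_2')^\top\bmd$ for these two paths, since then
\[
\beta^-(W_1)^\top\bmd\ \le\ \beta^-(W_1')^\top\bmd\ <\ \beta^+(W_2')^\top\bmd\ \le\ \beta^+(W_2)^\top\bmd.
\]

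For the path inequality I would split on the order of $u$ and $v$. If $u<v$, then $W_1'\in\lowerbdpath uv$ and $W_2'\in\upperbdpath uv$, so $\beta^-(W_1')^\top\bmd<\beta^+(W_2')^\top\bmd$ is exactly an instance of (\ref{eqn:condition_d}). If $u>v$, I pass to the reverses: $\reverse{W_1'}$ is a $(v,u)$-upper-bound-path and $\reverse{W_2'}$ is a $(v,u)$-lower-bound-path, with $v<u$, so (\ref{eqn:condition_d}) gives $\beta^-(\reverse{W_2'})^\top\bmd<\beta^+(\reverse{W_1'})^\top\bmd$; applying the identity $\beta^+(W)=-\beta^-(\reverse{W})$ (noted just after Definition \ref{def:bound-walk}) to each side turns this back into $\beta^-(W_1')^\top\bmd<\beta^+(W_2')^\top\bmd$. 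If the statement is meant to include $u=v$, note that then $W_1'$ and $W_2'$ are both forced to be the trivial path $\path u$ and both sides are $0$, so the claim should be read for $u\ne v$, matching the scope of (\ref{eqn:condition_d}).

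The whole argument is bookkeeping, so I do not expect a real obstacle; the only point requiring care is keeping the inequality directions straight — $\beta^+$ of an upper-bound-walk can only decrease when a cycle is excised, while $\beta^-$ of a lower-bound-walk can only increase (Lemma \ref{lemma:loopless}) — and invoking (\ref{eqn:condition_d}), which is stated only for ordered pairs $u<v$, on the correctly oriented pair of paths.
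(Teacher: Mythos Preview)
Your proposal is correct and follows exactly the route the paper intends: the corollary is stated immediately after Lemma~\ref{lemma:loopless} with no proof given, precisely because it is meant to be the straightforward consequence you spell out---replace each walk by a path via Lemma~\ref{lemma:loopless}, then invoke~(\ref{eqn:condition_d}). Your observation that the labels of $W_1$ and $W_2$ are swapped in the corollary's statement is also correct, as is your treatment of the $u>v$ case by reversal and your remark that the degenerate $u=v$ case (with trivial paths) must be excluded for the strict inequality to hold.
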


\subsection{Finding a Uniform Embedding}
In this section we prove the converse of Theorem \ref{thm:main theorem}. That is, given a matrix $A\in \mathcal{S}^n[k]$ we assume that there exists a $\bmd\in\bmD$ satisfying inequality system (\ref{eqn:condition_d}), and we show that there exists a uniform embedding $\Pi$ with respect to this particular threshold vector $\bmd$.
We present an iterative formula to calculate $\Pi:[n]\rightarrow \rr$, given $\bmd$ and the sets $\upperbdpath uv, \lowerbdpath uv$ for all $u,v\in [n]$, $u<v$. For brevity, let $\upperbd uv= \{\beta^+(W):W\in\upperbdpath uv\}$ and  $\lowerbd uv= \{\beta^-(W):W\in\lowerbdpath uv\}$. Define $\Pi$  as follows:
\begin{equation}\label{alg:embedding algorithm}
    \begin{array}{rl}
     \Pi(1)&= 0  \\
     \Pi(v)&= (ub_v+lb_v)/2, \quad\text{for }2\leq v\leq n, 
\end{array}
\end{equation}
where $ub_v, lb_v$ are defined iteratively using $\Pi(1),\dots ,\Pi(v-1)$ as:
\begin{equation}\label{equi:ubv and lbv}\begin{split}
    ub_v & = \min_{i\in[v-1]}\left\{ \Pi(i)+ \min\{\bmb^\top\bmd: \bmb\in\upperbd iv\} \right\},\\ 
    lb_v & = \max_{i\in[v-1]}\left\{ \Pi(i)+ \max\{\bma^\top\bmd: \bma\in\lowerbd iv\} \right\}.
\end{split}\end{equation}
The following two lemmas show that $\Pi$ defined as such is a uniform embedding of $A$ with respect to $\bmd$.

\begin{lemma}\label{lemma:lb<ub}
The map $\Pi$ as defined in (\ref{alg:embedding algorithm}) and (\ref{equi:ubv and lbv}) is strictly increasing.

\end{lemma}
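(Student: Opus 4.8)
The plan is to prove, by strong induction on $v$ (with the vacuous base $P(1)$), the statement $P(v)$: \emph{for every $w$ with $2\le w\le v$, the quantities $lb_w$ and $ub_w$ of \cref{equi:ubv and lbv} satisfy $lb_w<ub_w$}. Once $P(n)$ is established the lemma is immediate: $P(v)$ gives $\Pi(w)=(lb_w+ub_w)/2\in(lb_w,ub_w)$ for each $w\ge 2$, and since $\path{i,w}$ is always a lower-bound-path and $\beta^-(\path{i,w})^\top\bmd\ge 0$ (as $\bmd\in\bmD$ has positive entries), we get $lb_w\ge\Pi(i)$ for every $i<w$, hence $\Pi(w)>lb_w\ge\Pi(i)$, so $\Pi$ is strictly increasing. (If $\upperbd iv$ is empty for every $i<v$ — e.g.\ when $A$ is the identity — then $ub_v=\infty$ and \cref{equi:ubv and lbv} has to be read as choosing $\Pi(v)$ to be any real number exceeding $lb_v$, in which case there is nothing to prove; I therefore assume $ub_v$ finite from now on.)

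Before the inductive step I record the reformulation that $P(v-1)$ is equivalent to: \emph{for all $i<w\le v-1$, every $W\in\upperbdpath iw$ has $\dist iw<\beta^+(W)^\top\bmd$, and every $W\in\lowerbdpath iw$ has $\dist iw>\beta^-(W)^\top\bmd$} — this is immediate from $\Pi(w)\in(lb_w,ub_w)$ and the definitions of $lb_w$, $ub_w$. For the inductive step, choose $j,l\in[v-1]$ and paths $W_2\in\lowerbdpath jv$, $W_1\in\upperbdpath lv$ realizing the maximum $lb_v=\Pi(j)+\beta^-(W_2)^\top\bmd$ and the minimum $ub_v=\Pi(l)+\beta^+(W_1)^\top\bmd$. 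Form the concatenation $W:=W_1+W_2\rev$; since $W_2\rev$ is an upper-bound-walk, $W$ is an $(l,j)$-upper-bound-walk with $\beta^+(W)=\beta^+(W_1)-\beta^-(W_2)$, so proving $lb_v<ub_v$ amounts to proving $\dist lj<\beta^+(W)^\top\bmd$.

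Now split on the order of $l$ and $j$. If $l=j$, the inequality $\dist lj<\beta^+(W)^\top\bmd$ is just $0<\beta^+(W_1)^\top\bmd-\beta^-(W_2)^\top\bmd$, which is \cref{eqn:condition_d} for the pair $(l,v)$ applied to $W_1$ and $W_2$. If $l<j$, apply \Cref{lemma:loopless} to $W$ to get a path $W'\in\upperbdpath lj$ with $\beta^+(W')^\top\bmd\le\beta^+(W)^\top\bmd$, and the reformulation of $P(v-1)$ gives $\dist lj<\beta^+(W')^\top\bmd\le\beta^+(W)^\top\bmd$. If $l>j$, apply \Cref{lemma:loopless} instead to $W\rev$, a $(j,l)$-lower-bound-walk, to get a path $W''\in\lowerbdpath jl$ with $\beta^-(W'')^\top\bmd\ge\beta^-(W\rev)^\top\bmd=-\beta^+(W)^\top\bmd$; then the reformulation of $P(v-1)$ gives $\Pi(l)-\Pi(j)>\beta^-(W'')^\top\bmd\ge-\beta^+(W)^\top\bmd$, i.e.\ $\dist lj<\beta^+(W)^\top\bmd$. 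In every case $lb_v<ub_v$, completing the induction.

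The step needing the most care is the case where the ``turnaround'' walk $W_1+W_2\rev$ runs from the larger index $l$ to the smaller index $j$ (or conversely): this walk passes through $v$ and so does not lie inside $\{1,\dots,v-1\}$, which rules out applying \Cref{lemma: concatenation} to the principal submatrix on $[v-1]$; the detour through \Cref{lemma:loopless}, replacing the walk by a path between the same endpoints and then matching that path against $ub_j$ or $lb_l$, is exactly what makes the argument close. The only remaining difficulty is bookkeeping — keeping the inequality senses consistent ($\le$ when reducing an upper-bound-walk to a path, $\ge$ on the lower-bound side) and pairing each reduced path with the correct clause of \cref{equi:ubv and lbv}.
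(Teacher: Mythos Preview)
Your argument is correct and follows the paper's strategy---induction on $v$, choose the extremizers, concatenate $W_1+W_2\rev$, and case-split---but you handle the $l\neq j$ case more carefully: instead of the paper's bare appeal to \Cref{lemma: concatenation} (which, as you note, would presuppose that $\Pi$ is already a uniform embedding), you pass through \Cref{lemma:loopless} to obtain a genuine path in $\upperbdpath lj$ or $\lowerbdpath jl$ and then invoke your strengthened induction hypothesis via the definition of $ub_j$ or $lb_l$. That extra step is exactly what the paper's sketch needs to be airtight, so your version is a strict improvement on the same idea.
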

\begin{proof} 
We prove by induction on $v$ that $\Pi$ is increasing on $[v]$. The base case, $v=1$, is trivial. For the induction step, fix $v\geq 2$ and assume $\Pi$ is increasing on $[v-1]$.

Note first that $lb_v\geq \Pi(v-1)$. Namely, $\path{v-1,v}\in \lowerbdpath {v-1}v$, and thus either $\bm{\chi}_t\in \lowerbd {v-1}v$ for some $t\in [k]$, or $\bm{0}\in \lowerbd {v-1}v$. Therefore,
$\lowerbd {v-1}v$ contains at least one lower bound $\bm{a}$ so that  $\bma^\top \bmd \geq 0$.

We now show that $lb_v<ub_v$. This suffices to show that $\Pi$ is strictly increasing: if $lb_v<ub_v$ then 
$$\Pi(v)=(lb_v+ub_v)/2>lb_v\geq \Pi(v-1).
$$
Let $u, w$ be the vertices attaining $ub_v$ and $lb_v$ respectively,
and 
$\bmb_{\min}\in\upperbd uv$ such that 
$\bmb_{\min}^\top\bmd=\min\{\bmb^\top\bmd:\bmb\in\upperbd uv\}$, 
$\bma_{\max}\in\lowerbd wv$ such that 
$\bma_{\max}^\top\bmd=\max\{\bma^\top\bmd:\bma\in\lowerbd wv\}$, \ie,
\begin{equation*}\begin{split}
     ub_v = \Pi(u) + \bmb_{\min}^\top\bmd &= \min_{i\in[v-1]}\left\{ \Pi(i)+ \min\{\bmb^\top\bmd: \bmb\in\upperbd iv\} \right\},   \\
     lb_v = \Pi(w) + \bma_{\max}^\top\bmd & =\max_{j\in[v-1]}\left\{ \Pi(j)+ \max\{\bma^\top\bmd: \bma\in\lowerbd jv\} \right\}.
\end{split}\end{equation*}

Let $W_B$  be a $(u,v)$-upper-bound-path such that $\bmb_{\min} = \beta^+(W_B)$ and $W_A$ a $(w,v)$-lower-bound-path such that $\bma_{\max} = \beta^-(W_A)$. 
    Suppose first that $u=w$.
    Then, $\Pi(u)=\Pi(w)$, and $\bma_{\max}\in\lowerbd uv$ and $\bmb_{\min}\in\upperbd uv$. By the choice of $\bmd$, $\bma_{\max}^\top \bmd<\bmb_{\min}^\top \bmd$, and thus $lb_v < ub_v$.
    
    Suppose next that $u\not= w$. Then the concatenation $W_B + W_A\rev{}$ is a $(u,w)$-upper-bound-walk and $\beta^+( W_B + \reverse{W_A}) = \bmb_{\min} - \bma_{\max}$. By Lemma \ref{lemma: concatenation} this implies that  $\dist uw < (\bmb_{\min}-\bma_{\max})^\top\bmd$. This results in
    $$lb_v = \Pi(w)+\bma_{\max}^\top\bmd < \Pi(u) + \bmb_{\min}^\top\bmd = ub_v.$$

\end{proof}

\begin{lemma}\label{theorem:correctness of Pi}
Given \diagincmat{} $A\in\calS^n[k]$, and let $\Pi$ be defined as in (\ref{alg:embedding algorithm}). Then $\Pi$ satisfies inequality system (\ref{eqn:shortcut2}).
\end{lemma}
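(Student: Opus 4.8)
The plan is to prove that for every pair $u<v$ the value $\Pi(v)-\Pi(u)$ falls \emph{strictly} inside the open interval $(d_{t+1},d_t)$, where $t=a_{u,v}$; the forward implication of (\ref{eqn:shortcut2}) is then exactly this statement, and the backward implication follows because the intervals $(d_{s+1},d_s)$, $s=0,\dots,k$, are pairwise disjoint. Two structural facts make this easy. First, by the inequality $lb_v<ub_v$ established in the proof of Lemma \ref{lemma:lb<ub}, we have $lb_v<\Pi(v)<ub_v$. Second, $lb_v$ is a maximum over $i\in[v-1]$, and $ub_v$ a minimum over $i\in[v-1]$, of quantities that for $i=u$ already encode the bound coming from the single edge $\{u,v\}$; so it suffices to isolate the $i=u$ term on each side.

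For the lower bound, since $u<v$ the length-one walk $\path{u,v}$ is a $(u,v)$-lower-bound-path: a null-edge, if $\{u,v\}$ is one, is traversed from the smaller to the larger endpoint, which is permitted for lower-bound-walks. Hence $\beta^-(u,v)\in\lowerbd uv$, so $lb_v\ge\Pi(u)+\beta^-(u,v)^\top\bmd$ and therefore $\Pi(v)-\Pi(u)>\beta^-(u,v)^\top\bmd$, which equals $d_{t+1}$ when $t\le k-1$ and equals $0=d_{k+1}$ when $t=k$. For the upper bound, if $t=a_{u,v}\ge 1$ then $\{u,v\}$ is an edge; traversed from smaller to larger it makes $\path{u,v}$ a $(u,v)$-upper-bound-path with $\beta^+(u,v)=\bm\chi_t$, whence $ub_v\le\Pi(u)+d_t$ and $\Pi(v)-\Pi(u)<d_t$. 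If $t=0$ the required upper bound is $d_0=\infty$ and holds trivially (using the convention that an empty set $\upperbd uv$ contributes $+\infty$ to the minimum defining $ub_v$, and, in the degenerate case $ub_v=+\infty$, taking $\Pi(v)$ to be, say, $lb_v+1$). Together these give $d_{t+1}<\Pi(v)-\Pi(u)<d_t$ for $t=a_{u,v}$.

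For the converse, suppose $d_{t+1}<\Pi(v)-\Pi(u)<d_t$ for some $t\in\{0,\dots,k\}$, and put $s=a_{u,v}$; the part just proved gives $\Pi(v)-\Pi(u)\in(d_{s+1},d_s)$, and since $d_0=\infty>d_1>\dots>d_k>d_{k+1}=0$ the intervals $(d_{s+1},d_s)$ and $(d_{t+1},d_t)$ intersect only if $s=t$, so $a_{u,v}=t$. I do not expect a genuine obstacle in this lemma: essentially all the work is already done, in Lemma \ref{lemma: concatenation} (walks yield valid bounds) and in Lemma \ref{lemma:lb<ub} (the strict betweenness $lb_v<\Pi(v)<ub_v$). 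The only points that need a moment's care are verifying that the length-one walk $\path{u,v}$ qualifies as an upper- resp. lower-bound-walk — which is precisely the null-edge asymmetry built into Definition \ref{def:bound-walk} — and bookkeeping the boundary conventions $d_0=\infty$, $d_{k+1}=0$ (and the empty-bound-set convention) so that the extreme cases $t=0$ and $t=k$ come out right.
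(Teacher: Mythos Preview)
Your proof is correct and follows essentially the same argument as the paper's: isolate the $i=u$ term in the definitions of $ub_v$ and $lb_v$ via the length-one path $\path{u,v}$, and combine with $lb_v<\Pi(v)<ub_v$. You are actually a bit more careful than the paper in two places---you spell out the backward implication of the biconditional via disjointness of the intervals $(d_{s+1},d_s)$, and you flag the degenerate possibility $ub_v=+\infty$ (which the paper's formulation silently ignores); neither changes the substance of the argument.
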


\begin{proof}
Let $u,v\in[n]$ with $u<v$, and let $a_{u,v}=t$.
We need to show that 
$$d_{t+1} < \dist uv < d_t,
$$
where $d_{k+1}=0$ and $d_0=\infty$.

We first prove the upper bound. If $t=0$, then the inequality $\Pi(v)-\Pi(u)<d_0=\infty$ is trivially satisfied.
Suppose then that $t\neq 0$ (so $uv$ is an edge).  Then $\path{u,v}$ is a $(u,v)$-upper-bound-path, so $\beta^+(u,v)\in \upperbd uv$. By Definition \ref{def:bound-walk}, $\beta^+(u,v)^\top \bmd = d_{t}$. 
By Eq.~( \ref{alg:embedding algorithm}), 
\begin{eqnarray*}
\Pi(v)< ub_v &= & \min_{i\in[v-1]}\left\{ \Pi(i)+ \min\{\bmb^\top\bmd: \bmb\in\upperbd iv\} \right\},\\ 
&\leq &  \Pi(u)+ \min\{\bmb^\top\bmd: \bmb\in\upperbd uv\} \\
&\leq &  \Pi (u) + \beta^+(u,v)^\top\bmd =\Pi(u)+d_t.
\end{eqnarray*}

Next we prove the lower bound.
If $t=k$, then the inequality $\Pi(v)-\Pi(u)>d_{k+1}=0$ is satisfied since $\Pi$ is strictly increasing. 
If $0\leq t<k$, then $\path{u,v}$ is a $(u,v)$-lower-bound-path. So  $\beta^-(u,v)\in \lowerbd uv$, and 
\begin{eqnarray*}
\Pi(v)> lb_v &= & \max_{i\in[v-1]}\left\{ \Pi(i)+ \max\{\bmb^\top\bmd: \bmb\in\lowerbd iv\} \right\},\\ 
&\geq &  \Pi(u)+ \max\{\bmb^\top\bmd: \bmb\in\lowerbd uv\} \\
&\geq &  \Pi (u) + \beta^-(u,v)^\top\bmd =\Pi(u)+d_{t+1}.
\end{eqnarray*}

\end{proof}

Thus we have established that $\Pi$ as defined in (\ref{alg:embedding algorithm}) is a uniform embedding.

\section{Testing the Condition}

According to Theoremr \ref{thm:monotone}, a uniform embedding exists if and only if the inequality system (\ref{eqn:shortcut2}) has a solution. The existence of a uniform embedding can therefore be tested, and an embedding found, by using an linear program solver to determine feasibility of the system and, if feasible, find values for the variables $d_i$, $i\in [k]$ and $\Pi (u)$, $u\in [n]$. The condition for the existence of a uniform embedding as expressed in Theorem \ref{thm:main theorem} involves solving another inequality system, namely (\ref{eqn:condition_d}). This system only contains the variables $d_i$, $i\in [k]$, but the number of inequalities equals the number of pairs of lower- and upper-bound-paths. 

In this section, we will first give a bound on the number of inequalities, and compare the size of the two inequality systems. We then give an algorithm for generating all bounds that lead to inequalities for system (\ref{eqn:condition_d}), and discuss its complexity. Finally, we discuss the case where $k=2$, and give a combinatorial algorithm to find a uniform embedding for a given matrix in $\mathcal{S}^n[k]$, or give proof that it does not exist.

\subsection{A Partial Order on Bounds}

Here we define a partial order on bounds and find out we only need the minimal/maximal elements of this partial order for inequality system (\ref{eqn:condition_d}). This will allow us to bound the size of this system.

To bound the number of equalities in (\ref{eqn:condition_d}), first note that any
such inequality  involves a $(u,v)$-upper-bound-path $W_1$ and a $(u,v)$-lower-bound-path $W_2$. Then $W_1+W_2\rev$ is an upper-bound-cycle $C$, and the inequality can be rewritten as
$\beta^+(C)>0$. Thus we can rewrite (\ref{eqn:condition_d}). Let $\mathcal{C}$ be the set of upper-bound-cycles. Then $\bmd$ satisfies (\ref{eqn:condition_d}) if and only if, 
\begin{equation}\label{eqn:cycle_system}
\mbox{For all }C\in\mathcal{C},\quad \beta^+(C)^\top\bmd>0.
\end{equation}
Any cycle in the complete graph with vertex set $[n]$ has length at most $n$. Any edge in the cycle can contribute at most one to the sum of the coefficients of the bound. Thus we have that, for any cycle $C$.
$$
\beta^+(C)\in \zz^k_n:=\{ \bma\in\zz^k: \sum_{i=1}^k |a_i|\leq n\}.
$$
In particular, no coefficient of a path bound can have absolute value more than $n$. This implies that the number upper-bound-cycles, and thus the number of inequalities in (\ref{eqn:cycle_system}) is at most $(2n)^k$. 
Thus, inequality system (\ref{eqn:cycle_system}) has size $O(kn^k)$, while inequality system (\ref{eqn:shortcut2}) has size $O(n^3)$.

However, using the partial order defined below we can give a tighter bound on the number of inequalities. 

\begin{definition}\label{def:boundleq}
Define the relation $\preceq$ on $\zz^k$, such that given any $\bm a=(a_i), \bm b=(b_i)\in\zz^k$,
$$\bm a\preceq \bm b \quad\text{ if }\quad 
\sum_{i=1}^t a_i\leq \sum_{i=1}^tb_i \qquad\text{ for all } t\in[k].$$
\end{definition}

\begin{theorem}\label{thm:bound_equivalence}
Let $\bma, \bmb\in\zz^k$, then
$\bma\preceq \bmb \iff \bma^\top\bmd\leq \bmb^\top\bmd\text{ for all }\bmd\in\bmD.$
\end{theorem}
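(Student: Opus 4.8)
The plan is to reduce the equivalence to a statement about the single integer vector $\bmc := \bmb - \bma$ and to prove that statement by summation by parts. Writing $S_t := \sum_{i=1}^t c_i$ for $t\in[k]$ (and $S_0 := 0$), the left-hand condition $\bma \preceq \bmb$ is by \Cref{def:boundleq} precisely ``$S_t \ge 0$ for all $t\in[k]$'', while the right-hand condition is precisely ``$\bmc^\top\bmd \ge 0$ for all $\bmd\in\bmD$''. So it suffices to show these two are equivalent.

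The engine is the Abel summation identity
$$\bmc^\top\bmd \;=\; \sum_{i=1}^k (S_i - S_{i-1})\,d_i \;=\; \sum_{t=1}^{k-1} S_t\,(d_t - d_{t+1}) \;+\; S_k\,d_k .$$
I would then introduce the change of variables $e_t := d_t - d_{t+1}$ for $t<k$ and $e_k := d_k$, which is a linear bijection between $\bmD$ and the open orthant $\{(e_1,\dots,e_k) : e_t>0 \text{ for all } t\}$ (the inverse being $d_i = \sum_{t\ge i} e_t$), so that $\bmc^\top\bmd = \sum_{t=1}^k S_t e_t$ with the $e_t$ free to be arbitrary positive reals. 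The forward direction is then immediate: if every $S_t\ge 0$ then $\sum_t S_t e_t \ge 0$ for all positive $e_t$, i.e.\ $\bmc^\top\bmd\ge 0$ for every $\bmd\in\bmD$. For the converse, if $S_j < 0$ for some $j$, I would exhibit an explicit witness in $\bmD$: take $e_j = 1$ and $e_i = \varepsilon$ for $i\ne j$, with $\varepsilon>0$ small enough that $\varepsilon\sum_{i\ne j}|S_i| < |S_j|$; the resulting $\bmd\in\bmD$ satisfies $\bmc^\top\bmd = S_j + \varepsilon\sum_{i\ne j}S_i < 0$, so the right-hand condition fails.

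The only point requiring care — and the only thing I would call a (mild) obstacle — is that $\bmD$ is open, cut out by strict inequalities, so one cannot finish the converse by simply evaluating on the boundary vectors $\bm1_t := \sum_{i=1}^t \bm\chi_i$ (which satisfy $\bmc^\top\bm1_t = S_t$); one needs the interior perturbation above, or equivalently the remark that $\bmc^\top\bmd\ge 0$ holds on $\bmD$ iff it holds on the closure $\overline{\bmD} = \{d_1\ge\cdots\ge d_k\ge 0\}$, which is the conical hull of $\bm1_1,\dots,\bm1_k$. Everything beyond that is the routine bookkeeping of the Abel identity, and I expect no real difficulty; the proof is short.
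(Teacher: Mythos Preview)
Your argument is correct and is considerably more streamlined than the paper's. The Abel summation identity you use, together with the change of variables $e_t = d_t - d_{t+1}$ (and $e_k = d_k$), reduces both directions to trivialities about nonnegative linear combinations, and your handling of the open-cone issue via an interior perturbation is exactly the right fix. The paper takes a much more laborious route: for the forward direction it builds, through a recursive definition and two supporting lemmas, an explicit ``buffer'' vector $\bmc'$ with $c'_i \le b_i$ componentwise and $\sum_{i\le t} c'_i = \sum_{i\le t} a_i$ for every $t$, then argues $\bma^\top\bmd \le (\bmc')^\top\bmd \le \bmb^\top\bmd$; for the converse it also exhibits a witness $\bmd$, but by choosing $d_1$ and $d_t$ directly (where $t$ is the first violating index) rather than working in the $e$-coordinates. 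What your approach buys is brevity and transparency---the partial order $\preceq$ is revealed to be nothing more than the dual cone of $\bmD$ in the $e$-coordinates---whereas the paper's construction, though heavier, is more explicitly combinatorial and perhaps connects more directly to the ``weight rearrangement'' intuition it develops in its example.
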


The proof can be found in the Appendix.

This theorem implies that $\preceq$ is indeed a partial order. More importantly, we have the following corollary.

\begin{corollary}
Fix $u,v\in [n]$. If the inequalities of system (\ref{eqn:cycle_system}) hold for all minimal elements of $\{ \beta^+(C): C\in\mathcal{C}\}$  under $\preceq$, then all inequalities of the system hold. 
\end{corollary}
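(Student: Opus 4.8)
The plan is to reduce each inequality $\beta^+(C)^\top\bmd>0$ of \cref{eqn:cycle_system} to the same inequality for a \emph{minimal} bound lying below $\beta^+(C)$ in the order $\preceq$, and then invoke \Cref{thm:bound_equivalence} to transfer it back.

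First I would record the finiteness input. As observed just above \cref{eqn:cycle_system}, every $C\in\mathcal{C}$ satisfies $\beta^+(C)\in\zz^k_n$, and $\zz^k_n$ is finite; hence the set $S=\{\beta^+(C):C\in\mathcal{C}\}$ is finite. By \Cref{thm:bound_equivalence}, $\preceq$ is a genuine partial order on $\zz^k$ (antisymmetry holds because $\bma^\top\bmd=\bmb^\top\bmd$ for all $\bmd\in\bmD$ forces $\bma=\bmb$, since $\bmD$ is open in $\rr^k$). Consequently every element of the finite poset $(S,\preceq)$ dominates at least one minimal element: starting from any $\bmc\in S$, if $\bmc$ is not minimal pick $\bmc'\prec\bmc$ in $S$ and repeat; this strictly descending chain must terminate by finiteness, at a minimal $\bma\in S$ with $\bma\preceq\bmc$.

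Now take an arbitrary $C\in\mathcal{C}$ and put $\bmc=\beta^+(C)\in S$. Choose a minimal $\bma\in S$ with $\bma\preceq\bmc$; by definition of $S$ we have $\bma=\beta^+(C')$ for some upper-bound-cycle $C'$ whose bound $\bma$ is minimal in $S$, so the hypothesis of the corollary applies to $C'$ and gives $\bma^\top\bmd>0$. On the other hand, $\bma\preceq\bmc$ together with the $(\Rightarrow)$ direction of \Cref{thm:bound_equivalence} gives $\bma^\top\bmd\le\bmc^\top\bmd=\beta^+(C)^\top\bmd$ (using that our $\bmd$ lies in $\bmD$). Combining these, $\beta^+(C)^\top\bmd\ge\bma^\top\bmd>0$, which is precisely the inequality of \cref{eqn:cycle_system} for $C$. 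Since $C$ was arbitrary, all inequalities of \cref{eqn:cycle_system}, equivalently all inequalities of system (\ref{eqn:condition_d}), hold.

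I do not expect a substantive obstacle here; the proof is essentially bookkeeping on the poset. The only two points needing care are (i) justifying that minimal elements exist and are reached by descending chains, where finiteness of $S$ (via $\beta^+(C)\in\zz^k_n$) is the essential ingredient, and (ii) applying \Cref{thm:bound_equivalence} in the correct direction, namely using $\bma\preceq\beta^+(C)\Rightarrow\bma^\top\bmd\le\beta^+(C)^\top\bmd$ rather than the converse.
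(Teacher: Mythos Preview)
Your proposal is correct and follows exactly the reasoning the paper intends: the paper states this corollary without proof, treating it as immediate from \Cref{thm:bound_equivalence}, and your argument spells out precisely that immediacy---finiteness of $\{\beta^+(C):C\in\mathcal{C}\}\subset\zz^k_n$ ensures minimal elements exist below any given bound, and the forward direction of \Cref{thm:bound_equivalence} transfers the strict inequality upward. Your care in justifying antisymmetry of $\preceq$ (via openness of $\bmD$) matches the paper's one-line remark that the theorem makes $\preceq$ a partial order.
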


This implies that the number of inequalities in (\ref{eqn:cycle_system}) is bounded by the number of minimal elements (under $\preceq$), of $\zz^k_n$. The following lemma bounds this set for the special case where $k=2$.

\begin{lemma}\label{lemma:k2minimal}
If $k=2$, then inequality system (\ref{eqn:cycle_system}), including only minimal bounds, has size at most $2n$.
\end{lemma}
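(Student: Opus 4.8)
The plan is to bound directly the number of minimal elements, under $\preceq$, of the set $\mathcal{B}=\{\beta^+(C):C\in\mathcal{C}\}$ of cycle bounds; by the corollary following Theorem~\ref{thm:bound_equivalence} this number is exactly the number of inequalities one needs to retain in system~(\ref{eqn:cycle_system}). Throughout I would use the $k=2$ form of $\preceq$ read off from Definition~\ref{def:boundleq}: for $\bma=(a_1,a_2)$ and $\bmb=(b_1,b_2)$ in $\zz^2$, one has $\bma\preceq\bmb$ iff $a_1\le b_1$ and $a_1+a_2\le b_1+b_2$; equivalently, after the substitution $(a_1,a_2)\mapsto(a_1,a_1+a_2)$ the relation $\preceq$ becomes the coordinatewise order on $\zz^2$.

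First I would record two elementary facts. The set of minimal elements of $\mathcal{B}$ is an antichain for $\preceq$, and, for $k=2$, the elements of any antichain have pairwise distinct first coordinates: if $\bma\ne\bmb$ lie in an antichain and $a_1=b_1$, then one of $a_1+a_2\le b_1+b_2$ or $b_1+b_2\le a_1+a_2$ holds, which together with $a_1=b_1$ makes $\bma$ and $\bmb$ comparable, a contradiction. Hence the number of minimal elements of $\mathcal{B}$ is at most the number of distinct values of $\beta^+(C)_1$ taken over $C\in\mathcal{C}$.

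It remains to show that this last number is at most $2n$. Fix an upper-bound-cycle $C$ and call a step of $C$ \emph{upward} if it goes from a smaller to a larger vertex and \emph{downward} otherwise. By the definition of an upper-bound-walk every upward step is an edge (never a null-edge) and contributes $\beta^+=\bm\chi_1$ or $\bm\chi_2$ to $\beta^+(C)$, while every downward step contributes $-\bm\chi_1$ (precisely when it is a null-edge), $-\bm\chi_2$, or $\bm0$. Reading off first coordinates gives $\beta^+(C)_1=F-B$, where $F$ counts the upward steps of matrix value $1$ and $B$ counts the downward null-edges. Since $F+B$ is at most the number of edges of $C$, and a cycle in the complete graph on $[n]$ has at most $n$ edges, we get $|\beta^+(C)_1|\le n$; moreover $\beta^+(C)_1=n$ is impossible, since it would force $B=0$, $F=n$, and every one of the $n$ edges of $C$ to be an upward step, which cannot close into a cycle. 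Hence $\beta^+(C)_1\in\{-n,-n+1,\dots,n-1\}$, a set of size $2n$, so $\mathcal{B}$ has at most $2n$ minimal elements and the lemma follows.

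I do not expect a serious obstacle here; the argument is a counting argument resting on the corollary above and on the $k=2$ shape of $\preceq$. The only point needing a little care is the last one, excluding the value $\beta^+(C)_1=n$ so as to pass from the naive bound $2n+1$ to $2n$, which uses only that every upper-bound-cycle contains at least one downward step. (By symmetry $\beta^+(C)_1=-n$ is ruled out as well, and an entirely parallel analysis of the partial sum $\beta^+(C)_1+\beta^+(C)_2$ yields the same conclusion from the second coordinate, if one prefers that route.)
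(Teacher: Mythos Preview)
Your argument is correct and genuinely different from the paper's. The paper bounds the size of any antichain in $\zz^2_n=\{\bma\in\zz^2:|a_1|+|a_2|\le n\}$ by exhibiting an explicit decomposition of $\zz^2_n$ into $2n$ chains (the sets $S^1_t,S^2_t$ for $t\in[n]$) and invoking Dilworth's theorem. You instead exploit directly the shape of $\preceq$ for $k=2$: after the change of coordinates $(a_1,a_2)\mapsto(a_1,a_1+a_2)$ it becomes the componentwise order, so any antichain is forced to have pairwise distinct first coordinates, and you then bound the range of $\beta^+(C)_1$ by a short combinatorial count on the cycle. Your route is more elementary (no Dilworth, no chain construction) and in fact yields the slightly sharper bound $2n-1$ once you also exclude $\beta^+(C)_1=-n$, as you note parenthetically. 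What the paper's approach buys is that its explicit chains $S^1_t,S^2_t$ are reused verbatim in Section~5.3 to make the minimality test in line~9 of Algorithm~\ref{alg:FW-mod} run in $O(1)$ time; your proof of the lemma does not furnish that data structure, though the analogous ``columns'' $\{a_1=c\}$ would serve equally well there.
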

\begin{proof}
As argued above, the number of inequalities in this system is bounded by the number of minimal elements of $\zz^2_n$. We will bound this number by giving a decomposition of $\zz^2_n $ into $2n$ chains. The result then follows from Dilworth's theorem, and the fact that all minimal elements form an antichain. 

Fix $t\in [n]$. Let $S_t$ be the set of vectors $\bma \in\zz^2$ so that $|a_1|+\dots +|a_k|=t$. Consider the sets 
\begin{eqnarray*}
S_t^1&=&\{(-t+i,i)^\top:0\leq i\leq t\}\cup \{(i,t-i)^\top:1\leq i< t\},\mbox{ and}\\
S_t^2&=&\{(-t+i,-i)^\top:1\leq i\leq t\}\cup \{(i,-t+i)^\top:1\leq i\leq t\}.
\end{eqnarray*}
Both $S_t^1$ and $S_t^2$ are chains under $\preceq$, and they form a partition of $S_t$. Since $\zz_n^2=\cup_{t=1}^n S_t$, the result follows.
\end{proof}

\subsection{Generating the Bounds}

We employ a variation on the Floyd-Warshall algorithm \cite{fw1962fwalg} to enumerate all upper-bound-paths. See Algorithm \ref{alg:FW-mod} for the pseudocode. This also generates all lower-bound-paths, by reversal.

\SetKwFunction{FWvar}{FW-var}
\begin{algorithm}
  \SetKwData{Left}{left}
  \SetKwFunction{Concat}{Concatenate}
  \SetKwFunction{Path}{Path}
  \SetKwFunction{ubs}{UBW}
  \SetKwFunction{lbs}{LBW}
  \SetKwFunction{minubs}{Minimal-Upper-Bound-Walks}
  \SetKwFunction{maxlbs}{Maximal-Lower-Bound-Walks}
  \SetKwFunction{fwinit}{FW-Initialization}
  \SetKwInOut{Input}{input}\SetKwInOut{Output}{output}

  \Input{A \diagincmat{} $A\in\mathcal{S}^k$}
  \Output{Lookup table $\ubs, \lbs$ defined on $i,j\in[n]$: where 
  $\begin{array}{l}
  \ubs(i,j) = \mbox{all minimal elements of }\upperbdpath ij,\\ 
  \end{array}$
  }
  \BlankLine
  \For{$i\in[n]$}{
    \For{$j= i,\dots, n$}{
      \lIf{$a_{i,j} \neq 0$}{
        $\ubs(i,j)\leftarrow\{\path{i, j}\}$
      }
      $\ubs(j,i)\leftarrow\{\path{j, i}\}$\;
    }
  }
\For{$s=1,\dots, n$}{
\For{$i=1,\dots, n$}{
\For{$j=i,\dots, n$}{
    \ForEach{$W_1\in\ubs(i,s)\text{ and }W_2\in \ubs(s,j)$}{ 
        \If{$W_1+W_2$ is minimal in $\ubs(i,j)\cup\{W_1 + W_2\}$}{
            $\ubs(i,j)\leftarrow \ubs(i,j) \cup\{W_1+W_2\}$\;
            $\ubs(j,i)\leftarrow \ubs(j,i)\cup \{ (W_1+W_2)\rev\}$\;
        }
    }
}}}

  \Return{\ubs}\;  
  \caption{Bound-Generation}\label{alg:FW-mod}
\end{algorithm}

The complexity of this algorithm is dominated by the step where bounds are merged, and thus determined by the size of the set of bounds. If the minimality test in line 9 is implemented by looping through all elements of $S$, then the complexity of the bound-generation algorithm is $O(n^3M^3)$, where $M$ is the number of minimal elements in $\zz^k_n$. 

The minimal upper-bound-paths give insight into the structure of the matrix that constrains the uniform embedding. If inequality system (\ref{eqn:cycle_system}) does not have a solution, then any LP-solver will return a set of $k$ inequalities which, taken together, show the impossibility of fulfilling all constraints. Each of these inequalities is derived from a cycle, and this collection of cycles can be interpreted as the bottleneck that prevents the existence of a uniform embedding. 

\subsection{A Combinatorial Algorithm for the Case $k=2$}

For $k=2$, we can convert the bound generation, testing of condition (\ref{eqn:shortcut2}), and construction of the uniform embedding into a combinatorial algorithm.

Consider inequality system (\ref{eqn:cycle_system}) when $k=2$. Each inequality is of the form  
$a_1d_1+a_2d_2>0$, where $(a_1,a_2)^\top=\beta^+(C)$ for some upper-bound-cycle $C$.
Depending on the sign of $a_2$, $-a_1/a_2$ will give either a lower bound (if $a_2>0$) or an upper bound (if $a_2<0$) on $d_2/d_1$. Thus we can find, in time linear in the number of minimal bounds, the largest lower bound and smallest upper bound on $d_2/d_1$. The inequality system has a solution if and only if the largest lower bound is smaller than the smallest upper bound. 

If the bounds are incompatible and the system has no solution, then the two cycles giving the largest lower bound and smallest upper bound identify those entries  of the matrix that cause the non-existence of a uniform embedding.

Combining the methods we have developed, we now give the steps of the algorithm solving the uniform embedding problem

\subsubsection{Uniform Embedding Algorithm}

Given a matrix $A\in\mathcal{S}^n[k]$, perform the following steps:
\begin{enumerate}
    \item Generate all minimal upper bounds using the Bound-Generation algorithm (\Cref{alg:FW-mod}). Use $\ubs(v,v)$ to extract all minimal upper-bound-cycles.
    \item For each minimal upper-bound cycle, find its associated bound $\beta^+(C)$, and convert the inequality $\beta^+(C)$ into a lower or upper bound on $d_2/d_1$. Only keep the cycles $C_1$ and $C_2$ generating the largest upper bound and the smallest upper bound encountered in each step.
    \item If the largest lower bound is greater than or equal to the smallest upperbound on $d_2/d_1$, then print NO SOLUTION.  {\bf Exit} and return $C_1$ and $C_2$.
    \item If the largest lower bound is smaller than the smallest upper bound on $d_2/d_1$, then choose $d_1,d_2$ so that $d_2/d_1$ lies between these bounds.
    \item Compute a uniform embedding $\Pi$ with respect to $(d_1,d_2)$ using the formula given in (\ref{alg:embedding algorithm}) and (\ref{equi:ubv and lbv}). {\bf Exit} and return $\Pi$.
\end{enumerate}

The complexity of the algorithm is determined by the generation of bounds in the first step. We can use the partition of $\zz^2_n$ into chains $S^1_t$, $S^2_t$, $1\leq t\leq n$, as given in Lemma \ref{lemma:k2minimal}. Given a bound $(a_1,a_2)^\top$, we can identify which set $S^i_t$ the bound belongs to: $t=|a_1|+|a_2|$ and $i=1$  if $a_2>0$ and $i=2$ otherwise. Therefore, in line 9 of the Bound-Generation algorithm we only need to compare $W_1+W_2$ with the unique minimal element of the set $S^i_t$ it belongs to. This can be done in $O(1)$ time.

Since there are at most $2n$ minimal elements in $\ubs(i,j)$, the loop starting in line 8 of the Bound-Generation algorithm takes $O(n^2)$ steps. Therefore, the  Bound-Generation algorithm can be implemented to take $O(n^5)$ steps. Moreover, the algorithm can be easily modified to compute the upper-bound-paths as well as their associated bounds.   
The generation of the cycle inequalities in Step 2 is immediate from $\ubs$, since every upper-bound-cycle will be included in $\ubs(i,i)$, $i\in [n]$. Note that there will be duplications, since each cycle $C$ will be included in $\ubs (v,v)$ for any vertex $v\in C$. Therefore, generating the inequalities and the associated bounds on $d_2/d_1$ takes $O(n^2)$ steps.

If a threshold vector $(d_1,d_2)^\top$ can be found, then the uniform embedding can be found in $O(n^3)$ steps: there are $n$ iterations, and each iteration involves computing $lb_v$ and $ub_v$, which involves looping over all vertices $u<v$ and all bounds in $\ubs (u,v)$. 

The resulting complexity is somewhat higher than that of solving inequality system (\ref{eqn:shortcut2}) with a state-of-the-art Linear Programming solver. Namely, using the method of Vaidya \cite{Vaidya89} , an LP with $n$ variables and $m$ inequalities can be solved in $O((n+m)^{1.5} n)$ time. In (\ref{eqn:shortcut2}) there are $O(n^2)$ inequalities, so the LP solver has time complexity $O(n^{4})$.

Finally, note that the input size of the problem is $N=n(n-1)/2$, namely the number of upper diagonal entries of the matrix. Therefore, the complexity of the algorithm to find a uniform embedding for the case $k=2$ has complexity $O(N^{2.5})$.

\section{Conclusions}

We gave a sufficient and necessary condition for the existence of a uniform embedding of a Robinson matrix, in the form of a system of inequalities constraining the threshold values $d_1,\dots ,d_k$. For Robinson matrices taking values in $\{ 0,1,2\}$, we gave a $O(N^{2.5})$ algorithm which returns a uniform embedding, or returns two cycles that identify the matrix entries that cause a contradiction in the inequalities defining a uniform embedding. 

For Robinson matrices having more than three values, the condition for the existence of a uniform embedding involves solving an inequality system. An interesting question is whether the condition can be tested with a combinatorial algorithm, as in the case $k=2$. In particular, we saw that the problem can also be formulated as that of finding simultaneous embeddings for a family of nested proper interval graphs. The $k=2$ case shows that this is possible for a family of two. For $k=3$, we can find conditions on $d_1,d_2,d_3$ so that any pair of graphs in the family has a simultaneous embedding. If these conditions are contradictory, then no uniform embedding of the family exists. But if they are not, can we then solve the uniform embedding question combinatorially?

\bibliographystyle{splncs04}
\bibliography{ref}

\begin{thebibliography}{10}
\providecommand{\url}[1]{\texttt{#1}}
\providecommand{\urlprefix}{URL }
\providecommand{\doi}[1]{https://doi.org/#1}

\bibitem{atkins1998spectral}
Atkins, J.E., Boman, E.G., Hendrickson, B.: A spectral algorithm for seriation
  and the consecutive ones problem. SIAM Journal on Computing  \textbf{28}(1),
  297--310 (1998)

\bibitem{bogart1998short}
Bogart, K.P., West, D.B.: A short proof that `proper = unit'. Discrete
  Mathematics  \textbf{201}(1),  21--23 (1999)

\bibitem{chuangpishit2017uniform}
Chuangpishit, H., Ghandehari, M., Janssen, J.: Uniform linear embeddings of
  graphons. European Journal of Combinatorics  \textbf{61},  47--68 (2017)

\bibitem{corneil20043-sweep}
Corneil, D.G.: A simple 3-sweep {LBF} algorithm for the recognition of unit
  interval graphs. Discrete Applied Mathematics  \textbf{138}(3),  371--379
  (2004)

\bibitem{corneil1995simple}
Corneil, D.G., Kim, H., Natarajan, S., Olariu, S., Sprague, A.P.: Simple linear
  time recognition of unit interval graphs. Information processing letters
  \textbf{55}(2),  99--104 (1995)

\bibitem{fw1962fwalg}
Floyd, R.W.: Algorithm 97: Shortest path. Commun. ACM  \textbf{5}(6), ~345 (Jun
  1962)

\bibitem{gardi07}
Gardi, F.: The {R}oberts characterization of proper and unit interval graphs.
  Discrete Mathematics  \textbf{307}(22),  2906 -- 2908 (2007)

\bibitem{kendall1969incidence}
Kendall, D.: Incidence matrices, interval graphs and seriation in archeology.
  Pacific Journal of mathematics  \textbf{28}(3),  565--570 (1969)

\bibitem{laurent2017lex}
Laurent, M., Seminaroti, M.: A {Lex-BF}-based recognition algorithm for
  {R}obinsonian matrices. Discrete Applied Mathematics  \textbf{222},  151 --
  165 (2017)

\bibitem{laurent2017similarity}
Laurent, M., Seminaroti, M.: Similarity-{F}irst {S}earch: a new algorithm with
  application to {R}obinsonian matrix recognition. SIAM Journal on Discrete
  Mathematics  \textbf{31}(3),  1765--1800 (2017)

\bibitem{liiv2010seriation}
Liiv, I.: Seriation and matrix reordering methods: An historical overview.
  Statistical Analysis and Data Mining: The ASA Data Science Journal
  \textbf{3}(2),  70--91 (2010)

\bibitem{roberts1969indifference}
Roberts, F.S.: Indifference graphs, pp. 139--146. Academic Press (1969)

\bibitem{robinson}
Robinson, W.S.: A method for chronologically ordering archaeological deposits.
  American Antiquity  \textbf{16}(4),  293--301 (1951)

\bibitem{Vaidya89}
{Vaidya}, P.M.: Speeding-up linear programming using fast matrix
  multiplication. In: 30th Annual Symposium on Foundations of Computer Science.
  pp. 332--337 (1989)

\end{thebibliography}

\clearpage
\appendix
\section{Proof of Theorems}
\subsection{Strict Monotonicity of Uniform Embedding}\label{appendix: proof monotone}
In this section, we give a proof of Theorem \ref{thm:monotone}.
We break down the theorem into several steps.
We first show that the uniform embedding of a Robinson matrix with no repeating rows is always strictly monotone in \Cref{lemma: no repeating}.
Then we show in \Cref{lemma:strictly within} that, if a strictly monotone uniform embedding satisfies Condition (\ref{eqn:shortcut}), then we can construct another uniform embedding with Condition (\ref{eqn:shortcut2}).
By combining \Cref{lemma:strictly within} and \Cref{lemma: no repeating}, we prove \Cref{thm:monotone} by showing that we can place the embedded values of all repeating rows close enough, while the embedding remains strictly increasing.

\begin{lemma}\label{lemma: no repeating}
Let $A\in\calS^n[k]$ be a Robinson matrix with no repeating rows. Suppose $\Pi$ is a uniform embedding of $A$ with respect to $\bmd\in\bmD$, then $\Pi$ is strictly monotone. If $\Pi(1)<\Pi(2)$, then $\Pi$ is strictly increasing; and if $\Pi(1)>\Pi(2)$, then $\Pi$ is strictly decreasing. 
\end{lemma}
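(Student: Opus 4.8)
The plan is to read a uniform embedding off in order of its values: I will argue that any uniform embedding $\Pi$ is injective, that the linear order it induces on $[n]$ is again a Robinson ordering of $A$, and that — because $A$ has no repeated rows — this ordering must be the natural one $1<2<\dots<n$ or its reverse. Strict monotonicity of $\Pi$ then follows, with the sign determined by whether $\Pi(1)<\Pi(2)$ or $\Pi(1)>\Pi(2)$.

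For injectivity, suppose $\Pi(i)=\Pi(j)$ with $i\neq j$. Then $|\Pi(i)-\Pi(j)|=0$, and since $0<d_k$ while $d_{k+1}=-\infty<0$, condition~(\ref{eqn:shortcut}) forces $a_{i,j}=k$; moreover, for every $w\in[n]$ we have $|\Pi(w)-\Pi(i)|=|\Pi(w)-\Pi(j)|$, so by~(\ref{eqn:shortcut}) these two distances lie in the same bucket and $a_{w,i}=a_{w,j}$. Hence columns $i$ and $j$ of $A$ coincide, equivalently (by symmetry) rows $i$ and $j$ coincide, contradicting the hypothesis. So $\Pi$ is injective. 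Next I would reduce to one case: $-\Pi$ is also a uniform embedding of $A$ with respect to the same $\bmd$ (it preserves every pairwise distance), and $(-\Pi)(1)<(-\Pi)(2)\iff\Pi(1)>\Pi(2)$, so it suffices to show that $\Pi(1)<\Pi(2)$ implies $\Pi$ is strictly increasing. Since $\Pi$ is injective it induces a strict total order $\prec$ on $[n]$, $u\prec v\iff \Pi(u)<\Pi(v)$. I would then verify that $A$ re-indexed along $\prec$ is still Robinson: if $a\prec b\prec c$ then $|\Pi(b)-\Pi(a)|<|\Pi(c)-\Pi(a)|$, and since a strictly smaller distance falls in a bucket of index at least as large, $a_{a,b}\ge a_{a,c}$; symmetrically $a_{b,c}\ge a_{a,c}$. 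Thus both the natural order and $\prec$ are Robinson orderings of the same matrix $A$.

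It then remains to show that a Robinson matrix with no repeated rows admits a Robinson ordering that is unique up to reversal. Granting this, $\prec$ is the natural order or its reverse, and since $1\prec 2$ it is the natural order, so $\Pi$ is strictly increasing; the case $\Pi(1)>\Pi(2)$ follows from the reduction above, giving $\Pi$ strictly decreasing. I would prove the uniqueness by induction on $n$ via an ``endpoint peeling'' argument: in any Robinson ordering the first row is non-increasing along the ordering and the last row is non-decreasing (these are exactly the two defining Robinson inequalities applied at an extreme index), which constrains which indices can serve as endpoints; one argues that the unordered endpoint pair is forced by $A$ alone, removes one endpoint, and recurses on the induced $(n-1)\times(n-1)$ submatrix, whose restriction of $\Pi$ is again a uniform embedding.

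The main obstacle is precisely this last step. The trouble lies entirely in the ties: a Robinson matrix can possess many Robinson orderings when several rows agree on large blocks of columns (modules/twins), and only distinctness of rows excludes them. In particular, deleting the endpoint column in the inductive step can turn previously distinct rows — those that differed only in that column — into equal ones, so the induction hypothesis does not transfer verbatim, and the peeling must be carried out with care about which endpoint is removed and how such collisions are handled. Identifying the endpoints and managing this case analysis is where the real work is; by contrast, injectivity and the ``induced order is Robinson'' step are routine.
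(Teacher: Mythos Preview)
Your injectivity argument and the observation that the order $\prec$ induced by $\Pi$ is itself a Robinson ordering of $A$ are both correct; the paper opens with exactly the same injectivity step. Where you diverge is in the second half: you want the general combinatorial fact that a Robinson matrix with no repeated rows has a unique Robinson ordering up to reversal, and then read monotonicity of $\Pi$ off from that. The statement is true, but---as you rightly flag---an endpoint-peeling induction is awkward, because deleting an endpoint column can merge previously distinct rows and the ``no repeated rows'' hypothesis does not descend cleanly to the $(n-1)\times(n-1)$ submatrix.

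The paper avoids this detour entirely. It never proves (or needs) uniqueness of the Robinson ordering; instead it inducts on $v$ directly: assuming $\Pi$ is already strictly increasing on $[v-1]$, it shows $\Pi(v-1)<\Pi(v)$. Since rows $v-1$ and $v$ are distinct, there is some witness $w$ with $a_{v-1,w}\neq a_{v,w}$; the Robinson property pins down which of the two entries is larger (one way if $w<v$, the other if $w>v$), and feeding this into~(\ref{eqn:shortcut}) yields a strict inequality between $|\Pi(v-1)-\Pi(w)|$ and $|\Pi(v)-\Pi(w)|$ that forces $\Pi(v-1)<\Pi(v)$. Your route, if completed, would establish a strictly stronger fact about $A$ alone; the paper's route buys a short self-contained argument by proving only what the lemma actually needs, and the row-merging obstacle you identified simply never arises.
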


\begin{proof}
Let $A\in\calS^n[k]$ be a matrix that contains no repeating rows. 

To prove this lemma, we break it down to two parts. 
First we prove that all vertices are embedded to distinct values.
Second, we prove that the embedding $\Pi$ is monotone. 
And thus, $\Pi$ is strictly monotone.

To prove the first part, assume, toward a contradiction, that $\Pi(u)=\Pi(v)$ for some $u,v\in [n]$. Then for all $w\in[n]$,
$$
   d_{t+1} < |\dist uw| = |\dist vw| \leq d_t,
$$
and thus, by \Cref{def:uniform embedding}, $a_{u,w} = a_{v,w} = t$.
This contradicts to the assumption that $A$ contains no repeating rows. 

We now prove that $\Pi$ us is monotone. Assume that $\Pi(1)<\Pi(2)$; we will show that $\Pi$ is increasing. 
We proceed an inductive proof such that $\Pi$ restricted to $[v]$ is strictly increasing for each $v$ from $2$ to $n$. 
The base case, $\Pi(1)<\Pi(2)$, holds by assumption.

Inductively, suppose $v\geq 2$ and $\Pi$ is strictly increasing restricted to $[v-1]$.
By the inductive hypothesis, it suffices to prove that $\Pi(v-1)<\Pi(v)$ to show $\Pi$ is strictly increasing restricted to $[v]$. 
Since there are no repeating vertices in $A$, let $w\in[n]$ be a vertex such that $a_{v-1, w}\neq a_{v,w}$.
\begin{enumerate}
    \item Suppose $w<v$. 
    By the definition of Robinson matrix, (\ref{robinson:intro}), $a_{v-1, w}>a_{v,w}$. Denote ${t_1} = a_{v-1,w}, {t_2} = a_{v,w}$ for some $t_1>t_2$. By \Cref{def:uniform embedding}, $d_{t_1}<d_{t_2}$ and
\begin{equation}\label{eqn:lemma:monotone}
\dist w{v-1} \leq d_{t_1} \leq d_{t_2+1}<\dist wv.
\end{equation} 
Rewrite (\ref{eqn:lemma:monotone}) to obtain $\Pi(v-1)<\Pi(v)$.
\item Suppose $w$ has $v<w$. 
First need to show $\Pi(v)\leq \Pi(w)$. 
Let ${t_1} = a_{v-1,w}, {t_2} = a_{v,w}$, where $t_1 < t_2$, by the definition of Robinson matrix, (\ref{robinson:intro}). 
Then, by \Cref{def:uniform embedding}, $d_{t_1}> d_{t_2}$ and 
\begin{equation}
        d_{t_2+1} < \dist{v}{w}\leq d_{t_2} \leq  d_{t_1+1} <\dist{v-1}{w}<d_{t_1}.
    \end{equation}
    Rewrite to obtain $\Pi(v-1)<\Pi(v)$.
\end{enumerate}
This concludes $\Pi(v-1)<\Pi(v)$. Thus, if $\Pi(1)<\Pi(2)$, then $\Pi$ is strictly increasing defined on any $[v]$ for $v$ from 2 to $n$.
If we assume $\Pi(1)>\Pi(2)$, then we can prove $\Pi$ is strictly decreasing with the same logic. 
Thus, $\Pi$ is strictly monotone defined on $[n]$.
\end{proof}

\begin{lemma}\label{lemma:strictly within}
Let $\Pi$ be a uniform embedding of $A\in\calS^n[k]$ with respect to $\bmd\in\bmD$ and suppose $\Pi$ is strictly monotone. 
Then, there exists an increasing $\Pi'$ with respect to $\bmd$ where all inequalities are strict, \ie, which satisfies Condition (\ref{eqn:shortcut2}).
\end{lemma}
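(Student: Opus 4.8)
The plan is to keep the threshold vector $\bmd$ fixed and obtain $\Pi'$ from $\Pi$ by a small uniform rescaling toward the origin. First I would reduce to the case that $\Pi$ is strictly increasing: if $\Pi$ is strictly decreasing, replace it by the map $u\mapsto-\Pi(u)$, which is strictly increasing and leaves every distance $|\Pi(v)-\Pi(u)|$ unchanged, hence is still a uniform embedding of $A$ with respect to $\bmd$ in the sense of Condition (\ref{eqn:shortcut}). So from now on assume $\Pi(1)<\Pi(2)<\dots<\Pi(n)$; in particular $\Pi(v)-\Pi(u)>0$ whenever $u<v$.

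The observation driving the argument is that the only place where Condition (\ref{eqn:shortcut}) can fail to be strict is at an upper threshold: for a pair $u<v$ with $a_{u,v}=t\ge 1$ the distance $\Pi(v)-\Pi(u)$ may equal $d_t$, whereas every lower inequality $d_{t+1}<\Pi(v)-\Pi(u)$ is already strict (and, when $t=k$, the new convention $d_{k+1}=0$ is harmless here precisely because $\Pi(v)-\Pi(u)>0$), and the upper inequality for $a_{u,v}=0$ is vacuous. A rescaling $\Pi':=\lambda\Pi$ with $0<\lambda<1$ strictly shrinks every positive distance, so it converts $\Pi(v)-\Pi(u)\le d_t$ into $\Pi'(v)-\Pi'(u)<d_t$; the only risk is pushing some distance below its lower threshold, and since there are finitely many pairs and each lower inequality is strict, this is avoided by taking $\lambda$ close enough to $1$.

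Concretely I would set
$$\lambda_0=\max\Bigl\{\frac{d_{t+1}}{\Pi(v)-\Pi(u)}\ :\ u<v,\ a_{u,v}=t,\ 0\le t\le k-1\Bigr\},$$
with the convention that $d_1$ is used when $t=0$. Each term is strictly below $1$ because $\Pi(v)-\Pi(u)>d_{t+1}$ by Condition (\ref{eqn:shortcut}), so $\lambda_0<1$; pick any $\lambda\in(\lambda_0,1)$ and put $\Pi'=\lambda\Pi$. Then $\Pi'$ is strictly increasing since $\lambda>0$. For a pair $u<v$ with $a_{u,v}=t$: if $t\ge 1$ then $0<\Pi'(v)-\Pi'(u)=\lambda(\Pi(v)-\Pi(u))\le\lambda d_t<d_t$; and $\Pi'(v)-\Pi'(u)>d_{t+1}$ holds for $t=k$ because the left side is positive, and for $t<k$ because $\lambda>\lambda_0\ge d_{t+1}/(\Pi(v)-\Pi(u))$ (the same computation covers $t=0$, where the upper bound $d_0=\infty$ is vacuous). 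Hence $d_{t+1}<\Pi'(v)-\Pi'(u)<d_t$ for every pair. Finally, since the intervals $(d_{t+1},d_t)$, $t=0,\dots,k$, are pairwise disjoint, this also yields the reverse implication: $\Pi'(v)-\Pi'(u)\in(d_{s+1},d_s)$ forces $s=a_{u,v}$. So $\Pi'$ satisfies Condition (\ref{eqn:shortcut2}).

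There is no genuinely hard step here; the only point needing care is the bookkeeping of the threshold conventions, in particular checking that the case $a_{u,v}=k$ causes no trouble even though $d_{k+1}$ has been redefined from $-\infty$ to $0$ (which works because strict monotonicity already forces the relevant distance to be positive), together with the finiteness argument that lets a single global factor $\lambda$ repair all pairs simultaneously.
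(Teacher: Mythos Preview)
Your argument is correct and is genuinely different from the paper's. The paper fixes the ``tight'' pairs $\Pi(v)-\Pi(u)=d_t$ one at a time: it locates the first such pair, shifts all embedded values beyond $u$ down by a carefully chosen $\epsilon/2$, verifies that this repairs the tight pair without creating new ones, and iterates until none remain. Your approach instead applies a single global contraction $\Pi'=\lambda\Pi$ with $\lambda\in(\lambda_0,1)$, which in one stroke makes every upper inequality strict (since $\lambda<1$) while preserving every lower inequality (by the choice of $\lambda_0$). This is cleaner and avoids the iterative bookkeeping; the paper's shifting method, on the other hand, perturbs only part of the embedding at each step and so keeps some distances exactly fixed, which is occasionally useful in related arguments but is not needed here. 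One small remark: your definition of $\lambda_0$ as a maximum over a finite set tacitly assumes the set is nonempty; if every entry of $A$ equals $k$ the set is empty, but then any $\lambda\in(0,1)$ works, so you may simply set $\lambda_0=0$ in that degenerate case.
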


\begin{proof}
Let $\Pi$ be a uniform embedding of $A\in\calS^n[k]$ with respect to $\bmd\in\bmD$ that is strictly monotone. We may assume that $\Pi$ is strictly increasing; if $\Pi$ is strictly decreasing, then $-\Pi$ is strictly increasing, and it can be easily checked that $-\Pi$ is also a uniform embedding of $A$ with respect to $\bmd$
Write $\bmd=(d_i)$. If $d_{t+1}<\dist uv<d_t$ where $t = a_{u,v}$ for all $u,v\in[n]$ with $u<v$, then $\Pi' = \Pi$ satisfies the statement of the lemma. Therefore, we assume there exists at least one pair $u,v\in[n]$ such that $\dist uv = d_t$, where $t = a_{u,v}$. 
Let $u\in[n]$ be then minimum vertex (index) such that there is $v\in[n]$ with $\Pi(v)-\Pi(u) = d_t$ where $t = a_{u,v}$.
Let $v$ be the minimum vertex with this property. 
Notice, for any $i,j\in[n]$ with $i<j$, $t = a_{i,j}$ implies that $d_{t+1} < \dist ij$, and thus $\Pi(j)>d_{t+1} + \Pi(i)$. Let $\epsilon = \min_{i,j\in[n],i<j} \{\dist ij-d_{t+1}:a_{i,j} = t\}$, and notice that $\epsilon > 0$. Then, define 
$$\Pi_0(i) = \left\{
\begin{array}{ll}
    \Pi(i) &\text{for }i\leq u,  \\
    \Pi(i)-\frac\epsilon2 & \text{for }i> u.
\end{array}
\right.$$
Such $\Pi_0$ is a uniform embedding of $A$ with respect to $\bmd$:
\begin{enumerate}
\item For any pairs $i,j$ both in $\{1,\dots, u\}$ or both in $\{u+1,\dots, n\}$, $\dist ij = \Pi_0(j)-\Pi_0(i)$; 
\item For any $i\in\{1,\dots, u\}, j\in\{u+1,\dots, n\}$, 
$$\dist ij - \epsilon < \Pi(j) - \frac\epsilon2 - \Pi(i) = \Pi_0(j)-\Pi_0(i),$$ where the inequality holds since $\epsilon>0$ and the equality is given by the definition of $\Pi_0$. 
\end{enumerate}
Observe that $\Pi_0(v)-\Pi_0(u) = \Pi(v) - \frac\epsilon2-\Pi(u) = d_t-\frac\epsilon2 <d_t$, so that pair $u,v$ satisfies (\ref{eqn:shortcut2});
and there is no new pairs $u', v'\in[n]$ so that $\Pi_0(v')-\Pi_0(u')= d_t.$  
Iteratively, obtain $\Pi_1, \Pi_2,\dots, \Pi_r$ until all such $u,v$ pairs, where $t= a_{u,v}$ and $\dist uv=d_t$, are adjusted to satisfy (\ref{eqn:shortcut2}). 
Then, $\Pi' = \Pi_r$ is a uniform embedding of $A$ with respect to $\bmd$ with all pairs of $u<v$, $a_{u,v} = t\iff d_{t+1}<\dist uv<d_t$ (\ie, (\ref{eqn:shortcut2})).
\end{proof}

\begin{proofof}{\Cref{thm:monotone}}
Suppose $A$ has a uniform embedding $\Pi'$ with respect to $\bmd = (d_i)$. Let $I$ be an index set that contains maximum number of non-repeating rows by their first appearance: for all $j\not\in I$, there is $i\in I$ such that $i,j$ are repeating rows and $i<j$.  
Consider $A[I]$, the submatrix of $A$ defined by index set $I$. That is,  $A[I]$ is obtained from $A$ by removing all repeating rows after the first appearance. 
 let $\Pi'' = \Pi'|_{I}$, the uniform embedding $\Pi'$ restricted to $v\in I$. It is easy to verify that $\Pi''$ is a uniform embedding of $A[I]$ with respect to $\bmd$.
By definition, the induced submatrix $A[I]$ is a Robinson matrix that contains no repeating rows and has a uniform embedding $\Pi''$. Therefore $\Pi''$ is strictly monotone by \Cref{lemma: no repeating}. By 
 \Cref{lemma:strictly within}, this implies that there exists $\Pi_0:I\to\rr$ that is a strictly increasing uniform embedding of $A[I]$ with respect to $\bmd$ that satisfies Condition (\ref{eqn:shortcut2}) (\ie, all inequalities are strict).
For each $i\in I$, let 
\begin{equation}\label{eqn:thm:monotone}
\begin{split}
    2\epsilon_i = \min & 
\{d_{t}- (\Pi_0(i)-\Pi_0(j)):j<i, j\in I, a_{i,j}=t\} \cup \\ 
&\{(\Pi_0(j)-\Pi_0(i))-d_{t+1}: i<j, j\in I,  a_{i,j} = t \}.
\end{split}
\end{equation}
Notice, $\Pi_0$ is a uniform embedding of $A[I]$ that is strictly increasing, \ie, for all $i,j\in I$, if $i<j$, then $d_{t+1} < \Pi_0(j)-\Pi_0(i)<d_t$, 
and if $j<i$, then $d_{t+1}<\Pi_0(i)-\Pi_0(j) < d_t$: therefore $2\epsilon>0$ (and thus $\epsilon>0$). 


We now extend $\Pi_0$ to a uniform embedding of $A$.
Let $i<j$ be a consecutive pair in $I$ (\ie, $i,j\in I$, there is no $k\in I$ such that $i<k<j$). Let $r_i$ be the size of index set $\{i+1, \dots, j-1\}$ where rows $i, i+1, \dots, j-1$ are repeating rows in $A$ that are identical to row $i$. 
Define $\Pi:[n]\to\rr$ where $\Pi|_{I} = \Pi_0$, and for each $i\in I$, define $\Pi$ for $j\not\in I$ by 
$$\Pi(i+k) = \Pi(i)+\frac{k}{r_i} \epsilon_i \quad\text{for }1\leq k\leq r_i.
$$
Do this for every $i\in I$ to complete the definition of $\Pi$. We verify that $\Pi$ is a uniform embedding of $A.$ 
We divide into two cases, such that for all $j\in I, j\neq i,$ either $i<j$ or $j<i$, we verify for all $k\in [r_i]$, $\dist{i+k}j$ or $\dist j{i+k}$ satisfies (\ref{eqn:shortcut2}).
\begin{enumerate}
\item First, assume $j<i$. Let $1\leq k \leq r_i$. By definition, $\Pi(i+k) - \Pi(j) = \Pi(i)-\Pi(j)+ \frac{k}{r_i}\epsilon_i$. Notice the following inequalities,
\begin{equation*}
\begin{split}
    \Pi(i)-\Pi(j) + \frac{k}{r_i}\epsilon_i  &<
\Pi(i)-\Pi(j) + \epsilon_i \\  
& \leq 
\Pi(i)-\Pi(j) + d_t - (\dist ji)\\
& = d_t
\end{split}    
\end{equation*}
where $t = a_{i,j}$. This gives $\dist j{i+k} < d_t$. Recall that $a_{i+k, j} = t = a_{i,j}$ since $i, i+k$ are repeating rows, this gives:
$$ a_{i+k, j} = a_{i,j} = t \iff
d_{t+1}<\dist ji < \dist j{i+k} < d_t. $$
\item Next, assume $i<j$ and $1\leq k \leq r_i$. By definition, 
$ \dist{i+k}j = 
\Pi(j)- (\Pi(i) + \frac k{r_i}\epsilon_i) $ 
and observe that
\begin{equation*}
\begin{split}
\Pi(j)- \Pi(i) - \frac k{r_i}\epsilon_i  & > \dist ij -\epsilon_i\\
 & \geq  \dist ij - (\dist ij - d_{t+1})\\
 & = d_{t+1}
\end{split}
\end{equation*}
where $t = a_{i,j}$. This gives $ \dist ij >d_{t+1}$. Recall $a_{i+k,j} = a_{i,j} = t$ and 
$$ a_{i+k,j} = t = a_{i,j}\iff
d_{t+1}<\dist ij < \dist {i+k}j < d_t.
$$
\end{enumerate}
So the two cases establish that $\Pi$ is a uniform embedding of $A$ with respect to $\bmd$ and satisfies (\ref{eqn:shortcut2}).
\end{proofof}

\subsection{Bounds, Walks, and Their Concatenation}

We give the proof of \Cref{lemma: concatenation} in this subsection; before proving it, we need a supplementary lemma on decomposing a walk. 
\begin{lemma}\label{remark: decompose cycle}
Let $W=\path{w_0,\dots, w_p}$ be a walk, and let $W_1 = \path{w_0, \dots, w_s}$ and $W_2 = \path{w_s, \dots, w_p}$ be a decomposition of $W$ into two walks. 
If $W$ is an upper-bound-walk, then both $W_1$ and $W_2$ are upper-bound-walks. 
Further, $\beta^+(W) = \beta^+(W_1)+\beta^+(W_2)$. 
\end{lemma}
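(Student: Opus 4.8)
The statement is essentially a bookkeeping lemma about splitting an upper-bound-walk at an intermediate vertex, so the plan is to unwind the definitions in Section~\ref{sec:Bounds and walks} rather than invoke any substantive machinery. First I would observe that the defining property of an upper-bound-walk is entirely \emph{local}: a walk $W=\path{w_0,\dots,w_p}$ is an upper-bound-walk exactly when, for each consecutive pair $(w_{i-1},w_i)$ with $1\le i\le p$, the pair $\{w_{i-1},w_i\}$ is an edge (if $w_{i-1}<w_i$) and is an edge or a \nedge{} (if $w_{i-1}>w_i$), i.e.\ null-edges are traversed only from larger to smaller index. Since the consecutive pairs of $W_1$ are precisely $(w_0,w_1),\dots,(w_{s-1},w_s)$ and those of $W_2$ are $(w_s,w_{s+1}),\dots,(w_{p-1},w_p)$, together they form a sub-collection of the consecutive pairs of $W$. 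Hence if every consecutive pair of $W$ satisfies the required condition, so does every consecutive pair of $W_1$ and of $W_2$; this immediately gives that $W_1$ and $W_2$ are both upper-bound-walks.

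Second, for the additivity of $\beta^+$, I would simply appeal to the definition~(\ref{eqn:beta(W)}): for an upper-bound-walk, $\beta^+(W)=\sum_{i=1}^p \beta^+(w_{i-1},w_i)$, where $\beta^+(w_{i-1},w_i)$ is well-defined for each $i$ precisely because $W$ (and hence, by the first part, each of $W_1,W_2$) is an upper-bound-walk, so all the single-step bounds $\beta^+(w_{i-1},w_i)$ appearing below are defined (in particular no term is the ``undefined'' case, since a null-edge traversed from larger to smaller index contributes $\beta^+(v,u)=-\beta^-(u,v)$ via~(\ref{def:beta vu}), which is always defined). Splitting the sum at $i=s$,
\[
\beta^+(W)=\sum_{i=1}^{p}\beta^+(w_{i-1},w_i)
=\sum_{i=1}^{s}\beta^+(w_{i-1},w_i)+\sum_{i=s+1}^{p}\beta^+(w_{i-1},w_i)
=\beta^+(W_1)+\beta^+(W_2),
\]
which is the claimed identity. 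The degenerate cases $s=0$ or $s=p$ (where one of $W_1,W_2$ is a trivial one-vertex walk with empty sum equal to $\bm 0$) are handled by the usual empty-sum convention and cause no difficulty.

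\textbf{Main obstacle.} There is essentially no deep obstacle here; the only place requiring care is the bookkeeping around null-edges and the ``undefined'' branch of $\beta^+$. Specifically one must be sure that in decomposing $W$ no step of $W_1$ or $W_2$ becomes a null-edge traversed in the forbidden (smaller-to-larger) direction, which is exactly what the first paragraph rules out, and that consequently every summand $\beta^+(w_{i-1},w_i)$ in both sub-walks is defined, so that $\beta^+(W_1)$ and $\beta^+(W_2)$ make sense as vectors in $\zz^k$ before we can even state the additivity. Once this is noted, the proof is a two-line verification. I would also remark (for use in Lemma~\ref{lemma: concatenation}) that the same argument, applied with the roles of ``larger'' and ``smaller'' exchanged, gives the analogous statement for lower-bound-walks and $\beta^-$, and that iterating the decomposition yields the splitting of $W$ at any finite set of intermediate vertices.
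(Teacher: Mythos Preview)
Your proposal is correct and follows essentially the same argument as the paper's own proof: the locality of the upper-bound-walk condition immediately gives that $W_1$ and $W_2$ inherit it, and then the additivity of $\beta^+$ is just splitting the defining sum~(\ref{eqn:beta(W)}) at $i=s$. Your extra remarks on the well-definedness of each summand and the degenerate cases $s\in\{0,p\}$ are fine but not needed beyond what the paper does.
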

\begin{proof}
Let $W=\path{w_0,\dots, w_p}$ be an upper-bound-walk, and let $W_1 = \path{w_0, \dots, w_s}$ and $W_2 = \path{w_s, \dots, w_p}$ breaks $W$ into two walks, $W=W_1 + W_2$. Note that $\edge{w_{i-1}}{w_i}$ is an edge or a \nedge{} follows \Cref{def:bound-walk} for all $i\in[p]$.
Then, $W_1$ is an upper-bound-walk since $\edge{w_{i-1}}{w_i}$ is an edge or a \nedge{} satisfies \Cref{def:bound-walk} for all $i\in[s]$; $W_2$ is an upper-bound-walk since $\edge{w_{i-1}}{w_i}$ is an edge or a \nedge{} satisfies \Cref{def:bound-walk} for all $i\in\{s+1, \dots, p\}$.
Further, by \Cref{def:beta vu}, we have that 
\begin{equation}\begin{split}
       \beta^+(W) 
&= \sum^{p}_{i=1} \beta^+(w_{i-1}, w_i) \\
&= \sum^{s}_{i=1} \beta^+(w_{i-1}, w_i) + \sum^{p}_{i=s+1} \beta^+(w_{i-1}, w_i) \\
&= \beta^+(W_1)+\beta^+(W_2).  
\end{split}\end{equation}
\end{proof}


We restate {\Cref{lemma: concatenation} here:\\

\noindent{\bf \Cref{lemma: concatenation}. }{\em 
Let $A\in \mathcal{S}^n[k]$.
For any $u,v\in [n]$ and any $(u,v)$-walk $W$, if $W$ is an upper-bound-walk then $\beta^+(W)$ is an upper bound on $(u,v)$, and if $W$ is a $(u,v)$-lower-bound-walk then $\beta^-(W)$ is a lower bound on $(u,v)$.
}

\begin{proofof}{\Cref{lemma: concatenation}}
We give a proof by induction on the length $p$ of $W$. 
When $p=1$ , then $W =\path{u,v}$. Then $\beta^+(W) = \beta^+(u,v)$, which  is an upper bound on $(u,v)$ by \Cref{def:bound} and Equation \ref{def:beta vu}. 

Suppose then that the statement of the lemma holds for all $q<p$.
Suppose that $W = \path{w_0,\dots, w_{p-1},w_p}$ is a $(w_0, w_p)$-upper-bound-walk of length $p$. 
By \Cref{remark: decompose cycle}, we have that $W'$ and $\path{w_{p-1}, w_p}$ are upper-bound-walks of length $p-1$ and $1$, respectively.
Then, by inductive hypothesis, $\beta^+(W')$ and $\beta^+(w_{p-1}, w_p)$ are upper bounds on $(w_0, w_{p-1})$ and $(w_{p-1}, w_p)$. 
Therefore, the following inequalities are implied by Inequality system (\ref{eqn:shortcut2}):
\begin{eqnarray*}
     \dist{w_{0}}{w_{p-1}} & < &\beta^+(W')^\top\bmd\text{, and } \\
     \dist{w_{p-1}}{w_p} & <  &\beta^+(w_{p-1}, w_p)^\top\bmd.
\end{eqnarray*}
Combine the two inequalities, we have that the following is also implied by (\ref{eqn:shortcut2}):
\begin{equation}\begin{split}
\dist{w_0}{w_p} &= (\dist{w_{0}}{w_{p-1}}) + (\dist{w_{p-1}}{w_p}) \\
    & < \beta^+(W')^\top\bmd+\beta^+(w_{p-1}, w_p)^\top\bmd \\
    & = \beta^+(W)^\top\bmd.
    \end{split}
\end{equation}
This implies that $\beta^+(W)^\top\bmd$ is an upper bound on $\dist{w_0}{w_p}$. This concludes the induction step.

With a similar argument, we have that $\beta^-(W)$ is a lower bound on $(w_0, w_p)$ if $W$ is a $(w_0, w_p)$-lower-bound-walk. 
\end{proofof}

\subsection{From walks to paths}
We will prove \Cref{lemma:loopless} in this section. This is the lemma that states 
that for any bound derived from a walk, there is an equal or tighter bound derived from a path.


\begin{proofof}{\Cref{lemma:loopless}}
 Let $A\in\calS^n[k]$  be a \diagincmat{} and let $\bmd\in\bmD$, and suppose $\bmd$ satisfies (\ref{eqn:condition_d}). Suppose $W$ is a $(u,v)$-upper-bound-walk $W$.
We will show that  there exists a $(u,v)$-upper-bound-path $W'$ such that \begin{equation}
    (\beta^+(W'))^\top\bmd \leq \beta^+(W)^\top \bmd.
\end{equation}
The statement is trivial when $W$ is already a $(u,v)$-upper-bound-path. 
Suppose then that $W=\path{u=w_0,e_1,w_1,\dots, w_p=v}$ contains a cycle. Precisely,
assume that $w_i=w_{i+l}$.
Decompose the walk $W$ into 
$$\begin{array}{rl}
W_1 &= \path{u=w_0,e_1,\dots, w_i}, \\
C &=\path{w_i, e_{i+1},\dots, w_{i+l}}, \\
W_2 &= \path{w_{i+l}, e_{i+l+1},\dots, w_p=v}.
\end{array}$$
Denote $W' = W_1 + W_2$. 
As in \Cref{remark: decompose cycle}, $W_1$, $C$, and $W_2$ are all upper-bound-walks, and $\beta^+(W) = \beta^+(W') + \beta^+(C)$. 
By \Cref{lemma:ub cycle geq 0}, for any $\bmd\in\bmD$ satisfying (\ref{eqn:condition_d}) we have that $ \beta^+(C)^\top\bmd \geq 0$, and thus
$$\beta^+(W)^\top\bmd  =  \beta^+(W')^\top\bmd + \beta^+(C)^\top\bmd \geq  \beta^+(W')^\top\bmd.$$
 Thus, we can iteratively remove any cycle in the walk $W$, finding walks with equal or tighter bounds each time, until obtaining a path. 
 A similar argument proves the statement for lower-bound-walks

\end{proofof}
\subsection{A Partial Order on Bounds}

We will devote this section to prove \Cref{thm:bound_equivalence}, which we restate here.

\noindent{\bf \Cref{thm:bound_equivalence}. }{\em 
Let $\bma, \bmb\in\zz^k$, then
$\bma\preceq \bmb \iff \bma^\top\bmd\leq \bmb^\top\bmd\text{ for all }\bmd\in\bmD.$
}

We need some supplementary definitions and lemmas to prove this theorem. 
The intuition of proving this theorem is to construct a ``buffer" bound $\bmc\in\zz^k$ so that $\bma^\top\bmd\leq\bmc^\top\bmd\leq\bmb^\top\bmd$ holds for any $\bmd\in\bmD$. 
The construction is technical; therefore, we first give an example. 
\begin{example}\label{example: rearrange weights}
Consider $\heartsuit, \diamondsuit, \clubsuit, \spadesuit$ to be four types of objects with weights $d_1, d_2, d_3, d_4$ where $d_1>d_2>d_3>d_4>0$. Then, a collection of $a_1$ number of $\heartsuit$, $a_2$ number of $\diamondsuit$, $a_3$ number of $\clubsuit$, and $a_4$ number of $\spadesuit$ together has weight $\bma^\top\bmd$, where $\bma = (a_1, a_2, a_3, a_4)^\top$ and $\bmd = (d_1,d_2,d_3,d_4)^\top$. We consider two collections of such objects, and arrange them with into four slots: 
\begin{center}
    \begin{tabular}{c|c|c|c|c|c}
    Collection 1 & $\heartsuit\heartsuit\heartsuit\heartsuit$ & $\diamondsuit\diamondsuit$ & $\clubsuit\clubsuit\clubsuit$ & $\spadesuit\spadesuit$ & \\
    Weights & $4d_1$ & $2d_2$ & $3d_3$ & $2d_4$ & \\\hline
    Collection 2 & $\heartsuit\heartsuit\heartsuit$ & $\diamondsuit$ & $\clubsuit$ & $\spadesuit\spadesuit\spadesuit\spadesuit\spadesuit$ & \\
    Weights & $3d_1$ & $d_2$ & $d_3$ & $5d_4$ & \\
\end{tabular}
\end{center}
Then, we rearrange collection 2 as the following
\begin{center}
\begin{tabular}{c|c|c|c|c|c}
    Collection 1 & $\heartsuit\heartsuit\heartsuit\heartsuit$ & $\diamondsuit\diamondsuit$ & $\clubsuit\clubsuit\clubsuit$ & $\spadesuit\spadesuit$ & \\
    Weights & $4d_1$ & $2d_2$ & $3d_3$ & $2d_4$ & \\\hline
    Collection 2 rearranged & $\heartsuit\heartsuit\heartsuit\spadesuit$ & $\diamondsuit\spadesuit$ & $\clubsuit\spadesuit$ & $\spadesuit\spadesuit$ & \\
    Weights & $3d_1 + d_4$ & $d_2 + d_4$ & $d_3+d_4$ & $2d_4$ & \\
\end{tabular}
\end{center}
Notice that the weight in each slot in collection 1 is greater than collection 2, since $d_i> d_4$ for any $i<3$. 
We construct collection 3 from collection 2 so that, in each slot,  $\spadesuit$ is replaced by another type: 
\begin{center}
\begin{tabular}{c|c|c|c|c|c}
    Collection 1 & $\heartsuit\heartsuit\heartsuit\heartsuit$ & $\diamondsuit\diamondsuit$ & $\clubsuit\clubsuit\clubsuit$ & $\spadesuit\spadesuit$ & \\
    Weights & $4d_1$ & $2d_2$ & $3d_3$ & $2d_4$ & \\\hline
    Collection 3 & $\heartsuit\heartsuit\heartsuit\heartsuit$ & $\diamondsuit\diamondsuit$ & $\clubsuit\clubsuit$ & $\spadesuit\spadesuit$ & \\
    Weights & $4d_1$ & $2d_2$ & $2d_3$ & $2d_4$ & \\\hline
    Collection 2 rearranged & $\heartsuit\heartsuit\heartsuit\spadesuit$ & $\diamondsuit\spadesuit$ & $\clubsuit\spadesuit$ & $\spadesuit\spadesuit$ & \\
    Weights & $3d_1 + d_4$ & $d_2 + d_4$ & $d_3+d_4$ & $2d_4$ & \\
\end{tabular}
\end{center}
Notice that the weight of each slot in collection 1 is higher (heavier) than or equal to the corresponding slot in collection 3. 
Therefore the total weight of collection 1 is heavier than collection 3. 
Also, notice that we constructed collection 3 from collection 2 by replacing $\spadesuit$ by something heavier, \ie, $\heartsuit, \diamondsuit$, or $\clubsuit$; therefore, the total weight of collection 3 is heavier than collection 2. 
Let $\bmb = (b_i), \bma = (a_i), \bmc = (c_i)\in\zz^4$ denotes the number of $\heartsuit,\diamondsuit,\clubsuit,\spadesuit$ in each collection 1, 2, 3; then this weight comparison can be expressed as $\bma^\top\bmd\leq\bmc^\top\bmd\leq\bmb^\top\bmd.$
\end{example}

The above example demonstrates the technique of the proof of \Cref{thm:bound_equivalence}.
Suppose two vectors $\bma = (a_i), \bmb=(b_i)\in\zz^k$ have that $\bma\preceq\bmb$, then $\bma^\top\bmd\leq\bmb^\top\bmd$ is obvious if $a_i\leq b_i$ for all $i\in[k]$. 
If the two vectors cannot be compared component-wise (\ie, $a_i\leq b_i$ for all $i\in[k]$), then we rearrange the components and construct a ``buffer" vector $\bmc=(c_i)$ (such as collection 3), so that
\begin{itemize}
    \item $c_i\leq b_i$ for all $i\leq k$ and
    \item we may obtain $\bma^\top\bmd\leq \bmc^\top\bmd$ easily, according to the construction.
\end{itemize}


We now define how to construct the vector $c$.
\begin{definition}\label{def: buffer vector c}
Let $\bma = (a_i),\bmb = (b_i)\in\zz^k$. 
Suppose $\bma\preceq\bmb$, define $\bmc$ with the following steps.

Iteratively, for $t=1, \dots, k$, define $\{c_{t,i}\}_{i\in[t]}$ as follows. 
First, fix $t\in [k]$ and define auxiliary sequences $\{c_{t,i}\}_{i\in[t]}$ $\{e_{t,i}\}_{i\in[t-1]}$ and $\{f_{t,i}\}_{i\in[t]}$, as follows. For all 
Define $e_t = [a_t - b_t]_+$ and $f_{t,1} = e_t$. 
For $t\geq 2$ and $i = 1,\dots, t-1$, let
\begin{eqnarray}\label{eqn: recursive def ef}
    e_{t, i} &   =&\min\{f_{t,i}, b_i - c_{t-1,i}\},  \\
    f_{t, i+1} & =&  f_{t,i} - e_{t,i},\notag\\
    c_{t,i} &=& c_{t-1, i} + e_{t,i}\notag
\end{eqnarray}
Then, define $c_{t,t} = a_t - e_t$.
Finally, define $\bmc = (c_{k,i})_{i\in[k]}$. 
\end{definition}


\begin{lemma}\label{lemma: non-neg seq}
Let $\bma =(a_i), \bmb = (b_i)\in\zz^k$ and suppose $\bma\preceq\bmb$. 
Following the notations in \Cref{def: buffer vector c}, for all $t\in[k]$, the following holds:
\begin{enumerate}
    \item Sequence $\{f_{t,i}\}_{i\in[t]}$ is a non-negative and decreasing sequence. 
    \item For all $i\in[t]$, $c_{t,i}\leq b_i$.
\end{enumerate}
\end{lemma}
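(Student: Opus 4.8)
The statement is about the auxiliary sequences in Definition~\ref{def: buffer vector c}, and both parts should follow by induction on $t$. I would set up a simultaneous induction on $t\in[k]$, carrying the two claims together, since the definition of $\{c_{t,i}\}$ depends on $\{c_{t-1,i}\}$ and hence on the previous stage. The base case $t=1$ is essentially trivial: $f_{1,1}=e_1=[a_1-b_1]_+\geq 0$, so claim (1) holds vacuously (there is nothing to compare), and $c_{1,1}=a_1-e_1=\min\{a_1,b_1\}\leq b_1$, so claim (2) holds.

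For the induction step, fix $t\geq 2$ and assume both claims for $t-1$. For claim (1), I would argue by a nested induction on $i$ that each $f_{t,i}\geq 0$ and $f_{t,i+1}\leq f_{t,i}$. We have $f_{t,1}=e_t=[a_t-b_t]_+\geq 0$. Given $f_{t,i}\geq 0$, the key point is that $e_{t,i}=\min\{f_{t,i},\,b_i-c_{t-1,i}\}$ is non-negative: the first argument is $\geq 0$ by hypothesis, and the second argument $b_i-c_{t-1,i}\geq 0$ by claim (2) at stage $t-1$. Moreover $e_{t,i}\leq f_{t,i}$, so $f_{t,i+1}=f_{t,i}-e_{t,i}\geq 0$ and $f_{t,i+1}\leq f_{t,i}$, completing the nested induction and hence claim (1) at stage $t$.

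For claim (2) at stage $t$, I would split into the cases $i<t$ and $i=t$. For $i<t$: $c_{t,i}=c_{t-1,i}+e_{t,i}=c_{t-1,i}+\min\{f_{t,i},\,b_i-c_{t-1,i}\}\leq c_{t-1,i}+(b_i-c_{t-1,i})=b_i$, using the minimum's second argument. For $i=t$: here I need $c_{t,t}=a_t-e_t=a_t-[a_t-b_t]_+=\min\{a_t,b_t\}\leq b_t$, which is immediate. This is where the hypothesis $\bma\preceq\bmb$ will actually be used later (to prove the companion statements that $\bmc$ is a valid buffer and that $c_{k,i}$ equals $a_i$ on the right tail), but for the two claims stated in this particular lemma, $\bma\preceq\bmb$ is only needed insofar as it guarantees $\bmc$ is well-defined; the two inequalities themselves follow purely from the recursive structure.

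\textbf{Main obstacle.} The only subtlety I anticipate is bookkeeping: keeping the two inductions (outer on $t$, inner on $i$) cleanly separated, and making sure that when I invoke ``$b_i-c_{t-1,i}\geq 0$'' I am citing claim (2) at the correct earlier stage $t-1$ rather than circularly at stage $t$. Since $c_{t,i}$ for $i<t$ is built from $c_{t-1,i}$, and $c_{t,t}$ is defined separately via $a_t-e_t$, the dependency graph is acyclic and the induction goes through; I just need to state the order of quantifiers carefully. No genuine mathematical difficulty is expected here — the content is entirely in the definition, and this lemma is the routine sanity check that the construction does what the informal $\heartsuit\diamondsuit\clubsuit\spadesuit$ example suggests.
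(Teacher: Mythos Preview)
Your proposal is correct and follows essentially the same double induction (outer on $t$, inner on $i$) as the paper. Your argument is in fact tidier: you observe that $c_{t,t}=a_t-[a_t-b_t]_+=\min\{a_t,b_t\}\leq b_t$ directly, and you correctly note that the hypothesis $\bma\preceq\bmb$ is not actually invoked in either claim---the paper uses it in the base case to force $e_1=0$, but your computation $c_{1,1}=\min\{a_1,b_1\}$ shows this is unnecessary.
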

\begin{proof}
Let $\bma =(a_i), \bmb = (b_i)\in\zz^k$ and suppose $\bma\preceq\bmb$. Following the notations in \Cref{def: buffer vector c}.
We will give a proof by induction on $t$.
Consider the base case $t=1$. 
The assumption that $\bma\preceq\bmb$ gives that $a_1 \leq b_1.$ 
Therefore, $e_t = [a_1 - b_1]_+ = 0$ and $c_{1,1} = a_1\leq b_1$. 
This show part 1 of the statement of the lemma. Since $f_{t,1}=a_1\geq 0$ the sequence with one element, $\{f_{1,1}\}$, is non-negative and decreasing, which shows part 2.

Fix $t\geq 2$, and assume parts 1 and 2 of the statement hold for $t-1$.
Note that we defined $e_t = [a_t - b_t]_+\geq 0$. So $f_{t,1} = e_t \geq 0$
We will prove that $\{f_{t,i}\}_{i\in[t]}$ is decreasing by induction on $i$; this then shows that the sequence is non-negative.
The base case, $i=1$, is trivial.

Inductively, for all $2\leq i\leq t$, we have that $f_{t,i}\geq f_{t,i-1}\geq 0$. We will show that $f_{t,i+1}\geq f_{t,i}\geq 0$. 
By \Cref{def: buffer vector c}, $e_{t,i} = \min\{f_{t,i}, b_i - c_{t-1, i}\}$. 
By part 2 of the induction hypothesis (of the induction on $t$)   we have that $b_i \geq c_{t-1,i}$. Therefore, $b_i - c_{t-1,i}\geq 0$. 
Then, since $f_{t,i}$ and $b_i - c_{t-1,i}$ are both non-negative, we have that $e_{t,i}$ is non-negative by definition. 
Thus, we have that $f_{t,i+1} = f_{t,i} + e_{t,i} \geq f_{t,i}\geq 0$. This concludes the proof that  $\{f_{t,i}\}_{i\in[t]}$ is decreasing, and thus, part 2 holds for $t$.

Finally, we need to show that $c_{t,i}\leq b_i$. 
Since $\{f_{t,i}\}$ is a decreasing sequence, as the above inductive proof shows, we have that $e_{t,i}\geq 0$ for all $i\in[t-1]$. 
Then, by \Cref{def: buffer vector c}, for $i<t$, we have that $0\leq e_{t,i}\leq b_i - c_{t-1, i}$; 
or equivalently, $c_{t-1, i}\leq b_i$. 
\Cref{def: buffer vector c} defines $c_{t,t} = a_t - e_t = a_t - [a_t - b_t]_+$: if $a_t\leq b_t$, then $[a_t- b_t]_+= 0$ and we have that $c_{t,t} = a_t \leq b_t$; if $a_t> b_t$, then $[a_t - b_t]_+=a_t-b_t$ and we have that $c_{t,t} = a_t - a_t + b_t = b_t$.
Therefore, we conclude that $c_{t,i}\leq b_i$ for all $i\leq t.$
\end{proof}
\begin{lemma}\label{cor: c<=b}
Let $\bma =(a_i), \bmb = (b_i)\in\zz^k$ and suppose $\bma\preceq\bmb$. 
Following the notations in \Cref{def: buffer vector c}, for all $t\in[k]$, the following holds:
\begin{itemize}
    \item[(i)] $e_t = \sum_{i=1}^{t-1} e_{t,i}$. 
    \item[(ii)] $\sum_{i=1}^{t} a_i = \sum_{i=1}^{t} c_{t,i}$.
\end{itemize}
\end{lemma}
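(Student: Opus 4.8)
The plan is to prove both identities by induction on $t$, using the recursive definitions in \Cref{def: buffer vector c} together with \Cref{lemma: non-neg seq}. The key observation that makes part~(i) work is the \emph{telescoping} structure of the auxiliary sequence $\{f_{t,i}\}$: from the recursion $f_{t,i+1} = f_{t,i} - e_{t,i}$ we get $e_{t,i} = f_{t,i} - f_{t,i+1}$, so $\sum_{i=1}^{t-1} e_{t,i} = f_{t,1} - f_{t,t}$. Since $f_{t,1} = e_t$ by definition, part~(i) will follow provided we can show $f_{t,t} = 0$. (Note that in the recursion, $f_{t,i+1}$ is only defined for $i \le t-1$, so $f_{t,t}$ is the last term of the sequence.) Showing $f_{t,t}=0$ is where the hypothesis $\bma \preceq \bmb$ is actually used, via \Cref{lemma: non-neg seq}.

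For part~(i), I would argue as follows. Unwinding the recursion, $f_{t,i} = f_{t,1} - \sum_{j<i} e_{t,j} = e_t - \sum_{j<i} e_{t,j}$, and $e_{t,i} = \min\{f_{t,i},\, b_i - c_{t-1,i}\}$. The sum $\sum_{i=1}^{t-1} e_{t,i}$ telescopes to $e_t - f_{t,t}$, so it suffices to show $f_{t,t} = 0$, i.e. that $e_t = \sum_{i=1}^{t-1} e_{t,i}$ is achieved before running out of terms. Suppose not; then for every $i\in[t-1]$ we have $e_{t,i} = b_i - c_{t-1,i}$ (the ``other'' alternative in the min is selected at every step, because $f_{t,i} > b_i - c_{t-1,i}$ would have to hold all the way through, as otherwise once $f_{t,i}$ is chosen the remaining $f$'s and hence $e$'s vanish and we are done). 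But then $\sum_{i=1}^{t-1} e_{t,i} = \sum_{i=1}^{t-1}(b_i - c_{t-1,i}) = \sum_{i=1}^{t-1} b_i - \sum_{i=1}^{t-1} c_{t-1,i} = \sum_{i=1}^{t-1} b_i - \sum_{i=1}^{t-1} a_i$ by the inductive hypothesis (ii) applied at $t-1$. Adding $e_t = [a_t - b_t]_+ \ge a_t - b_t$ and using $\bma \preceq \bmb$ (which gives $\sum_{i=1}^t a_i \le \sum_{i=1}^t b_i$) leads to $f_{t,t} = e_t - \sum_{i<t} e_{t,i} \le (a_t - b_t) + \sum_{i<t} a_i - \sum_{i<t} b_i = \sum_{i\le t} a_i - \sum_{i\le t} b_i \le 0$; combined with $f_{t,t}\ge 0$ from \Cref{lemma: non-neg seq}(1) this forces $f_{t,t}=0$. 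In the remaining case where $e_{t,i}=f_{t,i}$ is selected at some step $i_0$, one checks directly that $f_{t,i}=0$ for $i > i_0$, hence again $f_{t,t}=0$ and the telescoped sum equals $e_t$. Either way part~(i) holds.

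For part~(ii), I would use part~(i) just proved. By \Cref{def: buffer vector c}, $c_{t,i} = c_{t-1,i} + e_{t,i}$ for $i \le t-1$ and $c_{t,t} = a_t - e_t$. Therefore
\[
\sum_{i=1}^{t} c_{t,i} = \sum_{i=1}^{t-1}\bigl(c_{t-1,i} + e_{t,i}\bigr) + (a_t - e_t) = \sum_{i=1}^{t-1} c_{t-1,i} + \sum_{i=1}^{t-1} e_{t,i} + a_t - e_t.
\]
By the inductive hypothesis (ii) at $t-1$, $\sum_{i=1}^{t-1} c_{t-1,i} = \sum_{i=1}^{t-1} a_i$, and by part~(i), $\sum_{i=1}^{t-1} e_{t,i} = e_t$, so the right-hand side collapses to $\sum_{i=1}^{t-1} a_i + e_t + a_t - e_t = \sum_{i=1}^{t} a_i$, as desired. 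The base case $t=1$ is immediate: there are no $e_{1,i}$ terms so (i) reads $e_1 = 0$, which holds since $a_1 \le b_1$; and $c_{1,1} = a_1 - e_1 = a_1$, giving (ii).

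I expect the main obstacle to be part~(i): the careful bookkeeping of which branch of the $\min$ is taken in the definition of $e_{t,i}$, and in particular making rigorous the claim ``once $e_{t,i}=f_{t,i}$ is chosen, all later $e_{t,j}$ vanish.'' This requires tracking $f_{t,i} = e_t - \sum_{j<i} e_{t,j}$ explicitly rather than reasoning case-by-case, and it is the one place where \Cref{lemma: non-neg seq}(1), namely that $\{f_{t,i}\}$ is non-negative and decreasing, is indispensable. The rest of the argument is routine telescoping and substitution.
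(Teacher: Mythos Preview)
Your approach is essentially the same as the paper's: induction on $t$, telescoping $\sum_{i<t} e_{t,i} = f_{t,1}-f_{t,t}=e_t - f_{t,t}$, and then showing $f_{t,t}=0$ by contradiction using the inductive hypothesis for~(ii) together with $\bma\preceq\bmb$. Your treatment of part~(ii) is in fact cleaner than the paper's, which re-runs a case split on the sign of $a_t-b_t$ rather than simply invoking part~(i) as you do.

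There is, however, one concrete sign error in your argument for~(i). You write
\[
f_{t,t} \;=\; e_t - \sum_{i<t} e_{t,i} \;\le\; (a_t - b_t) + \sum_{i<t} a_i - \sum_{i<t} b_i,
\]
justifying the inequality by $e_t=[a_t-b_t]_+\ge a_t-b_t$. But $e_t$ enters with a \emph{plus} sign, so replacing it by the smaller quantity $a_t-b_t$ yields $\ge$, not $\le$; as written the chain is false and does not give $f_{t,t}\le 0$. The repair is immediate and is precisely what the paper does: under your standing assumption $f_{t,t}>0$, the fact that $\{f_{t,i}\}$ is decreasing (\Cref{lemma: non-neg seq}) gives $0<f_{t,t}\le f_{t,1}=e_t$, so $e_t>0$, hence $a_t>b_t$ and $e_t=a_t-b_t$ \emph{exactly}. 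With this equality in place of your inequality, the computation $f_{t,t}=\sum_{i\le t}a_i-\sum_{i\le t}b_i\le 0$ goes through and contradicts $f_{t,t}>0$. (The paper organizes this as an explicit case split $a_t\le b_t$ versus $a_t>b_t$, observing that in the former case $e_t=0$ forces all $f_{t,i}=0$ outright.) With that one-line correction, your proof is complete.
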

\begin{proof}
Let $\bma =(a_i), \bmb = (b_i)\in\zz^k$ and suppose $\bma\preceq\bmb$. 
We give a proof by induction on $t$.

For the base case, when $t=1$, \Cref{def: buffer vector c} defines $c_{1,1} = a_{1} + e_1$. 
And since $\bma\preceq\bmb$ implies that $a_1\leq b_1$, so $e_1 = [a_1 - b_1]_+ = 0$, so part (i) holds.   
Therefore, $a_1 = c_{1,1}$, so part (ii) holds.

Inductively, suppose that, for  $t>1$, we have $e_{t-1} = \sum_{i=1}^{t-2} e_{t-1,i}$ and $\sum_{i=1}^{t-1} a_i = \sum_{i=1}^{t-1} c_{t-1,i}$. 
We divide into two cases: when $a_t\leq b_t$ and when $a_t>b_t$.
If $a_t\leq b_t$, then $e_t = [a_t - b_t]_+ =0$ and $f_{t,1} = e_t$. 
From \Cref{lemma: non-neg seq}, we have that sequence $\{f_{t,1}\}$ is decreasing and $b_i - c_{t,i}\geq 0$;
therefore, when $a_t\leq b_t$, $f_{t,i}= 0$ for all $i\in[t]$ and $e_{t,i} = 0$ for all $i<t$. 
Thus, $e_t = \sum_{i=1}^{t-1}  e_{t,i} = 0$. 

Next, suppose that $a_t > b_t$. By \Cref{def: buffer vector c}, $e_t = a_t - b_t,$ so $c_{t,t} = b_t$. 
Notice that \Cref{eqn: recursive def ef} defines $f_{t,i+1} = f_{t,i}-e_{t,i}$, and rewriting the equation we obtain $f_{t,i} = f_{t,i+1}+e_{t,i}$. 
Expand $e_{t} = f_{t,1}$ according to \Cref{eqn: recursive def ef}: 
\begin{equation}\begin{split}
    e_t = f_{t,1}  & = f_{t,2} + e_{t,1} \\
    & = f_{t, 3} + e_{t,2} + e_{t,1} \\
    & = f_{t, 4} + e_{t,3} + e_{t,2} + e_{t,1} \\
    & = \dots\\
    & = f_{t,t} + e_{t, t-1} + \dots + e_{t,1}.
\end{split}\end{equation}
Then, we need to show $f_{t,t} =0 $ so that $e_t$ can be written by the sum of $e_{t,i}$ only. 

From \Cref{lemma: non-neg seq}, we have that $f_{t,t}\geq 0$ since sequence $\{f_{t,i}\}_{i\in[t]}$ is non-negative. 
Suppose that $f_{t,t}>0$, then we have $e_{t,i} = b_i - c_{t-1, i}$ for all $i\in[t-1]$. 
That is, we know that $ b_i - c_{t-1,i}<f_{t,i}$ for any $i\leq t$.
This is true since, otherwise, if $e_{t,i}$ is defined by $f_{t,i}\leq b_i - c_{t-1,i}$ for some $i<t$, then $f_{t,i+1}= f_{t,i} - e_{t,i} = 0$;
then, $f_{t,t} = 0$ since we show that $\{f_{t,i}\}$ is a non-negative and decreasing sequence in \Cref{lemma: non-neg seq}. Then we obtain the following inequality.
\begin{equation}\label{eqn: ftt}
a_t - b_t = e_t 
= f_{t,t} + \sum_{i=1}^{t-1} e_{t,i} 
> \sum_{i=1}^{t-1} e_{t,i} 
= \sum_{i=1}^{t-1} (b_i-c_{t-1, i}).
\end{equation}
By the inductive hypothesis, we have $\sum_{i=1}^{t-1} a_i = \sum_{i=1}^{t-1} c_{t-1,i}$. Then, substitute and we have that 
$$
a_t - b_t > \sum_{i=1}^{t-1} (b_i-c_{t-1, i})= \sum_{i=1}^{t-1} b_i-\sum_{i=1}^{t-1} c_{t-1, i} = \sum_{i=1}^{t-1} b_i-\sum_{i=1}^{t-1} a_i. 
$$
And move terms in the above equation, we have that
$$
\sum_{i=1}^{t} a_i > \sum_{i=1}^{t} b_i
$$
However, we assumed that $\bma\preceq\bmb$ and thus $\sum_{i=1}^ta_i\leq\sum_{i=1}^tb_i$. This is a contradiction. 
Therefore, $f_{t,t} = 0$, and $e_t = \sum_{i=1}^{t-1} e_{t,i}.$

Finally, we will show $\sum_{i=1}^t a_i = \sum_{i=1}^t c_{t,i}$. 
Note that we have $ e_t = [a_t - b_t]_+ = \sum_{i=1}^{t-1} e_{t,i}$, $c_{t,t} = a_t - e_t$, and $c_{t,i} = c_{t,i-1} + e_i$.
If $a_t\leq b_t$, then $e_t = 0$ and $e_{t,i} = 0$ for all $i<t$, so we have $c_{t,t} = a_t$ and $c_{t,i} = c_{t-1,i}$. Combining with the inductive hypothesis, we have that 
$$\sum_{i=1}^{t} a_i = a_t + \sum_{i=1}^{t-1} a_i = c_{t,t} + \sum_{i=1}^{t-1}c_{t,i} = \sum_{i=1}^{t} c_{t,i}.$$
Now we suppose that $a_t > b_t$, then $e_t = a_t - b_t$ and $c_{t,t} = b_t$.
Thus, we also have the following equation:
\begin{equation}\begin{split}
\sum_{i=1}^t c_{t,i} 
&= c_{t,t} + \sum_{i=1}^{t-1} c_{t,i}  \\
&= b_{t} + \sum_{i=1}^{t-1} (c_{t-1,i} + e_{t,i}) \\
&= b_{t} + \sum_{i=1}^{t-1} c_{t-1,i} + \sum_{i=1}^{t}e_{t,i} \\
&= b_{t} + \sum_{i=1}^{t-1} a_{i} + e_t \\
&= b_{t} + \sum_{i=1}^{t-1} a_{i} + a_t-b_t\\
&= \sum_{i=1}^{t} a_{i},
\end{split}\end{equation}
which was what we want.
\end{proof}

\begin{proofof}{\Cref{thm:bound_equivalence}}
\thisway We first prove the forward direction. 
Suppose $\bm a, \bm b\in\zz^k$ such that $\bm a\preceq\bm b$. 
We will show $\bma^\top\bmd\leq \bmb^\top\bmd$ for all $\bmd\in\bmD$.

We construct $\bmc\in\zz^k$ use \Cref{def: buffer vector c} and we follow the notations in \Cref{def: buffer vector c}. 
We decompose the proof into two parts such that $\bma^\top\bmd\leq\bmc^\top\bmd$ and $\bmc^\top\bmd\leq\bmb^\top\bmd$, and thus the conclusion follows.

By \Cref{lemma: non-neg seq}, we have that $c_i\leq b_i$, and thus $c_id_i\leq b_i d_i$. Then 
$$\bmc^\top\bmd = \sum_{t=1}^{k}c_td_t\leq \sum_{t=1}^{k}b_td_t = \bmb^\top\bmd.$$
So $\bmc^\top\bmd \leq  \bmb^\top\bmd$ follows immediately.

To prove $\bma^\top\bmd\leq \bmc^\top\bmd$, we give an inductive proof such that, for all $t\in[k]$, $\sum_{i=1}^t a_{t,i}d_i \leq \sum_{i=1}^tc_{t,i}d_i$.
Let $e_t,$ $\{e_{t,i}\}_{i\in[t]}$, and $\{c_{t,i}\}_{i\in[t]}$ defined as in \Cref{def: buffer vector c}, that is, $e_t = [a_t - b_t]_+= \sum_{i=1}^{t-1} e_{t,i}$ and $c_{t,i} = c_{t-1, i} + e_{t, i}$. 

When $t=1$, we have $c_{1,1} = a_1$ by \Cref{cor: c<=b}. Then, $ a_1d_1  = c_{1,1}d_1 $. This is the base case of the inductive statement. 

Inductively, suppose that, for any $t>1$, we have $\sum_{i=1}^{t-1} a_i d_i\leq \sum_{i=1}^{t-1} c_{t-1,i} d_i$. 
We divide into two cases when $a_t\leq b_t$ and when $a_t>b_t.$
When $a_t\leq b_t$, \Cref{cor: c<=b} gives $c_{t,t} = a_t$ and $c_{t,i} = c_{t-1,i}$ for all $i<t$. 
Then, by the inductive hypothesis, $$\sum_{i=1}^{t} a_i d_i = a_td_t + \sum_{i=1}^{t-1} a_i d_i \leq c_{t,t} d_t +  \sum_{i=1}^{t-1} c_{t-1,i} d_i = c_{t,t} d_t +  \sum_{i=1}^{t-1} c_{t,i} d_i =\sum_{i=1}^{t} c_{t,i} d_i,$$ which satisfies the inductive hypothesis. 

Consider when $a_t > b_t,$ then $e_t = a_t - b_t>0$ and $c_{t,t} = b_t$. Consider the following sequence of inequalities. 
\begin{equation}
    \begin{split}
       \sum_{i=1}^t a_i d_i  & = a_t d_t + \sum_{i=1}^{t-1} a_i d_t\\
       & \leq a_t d_t  + \sum_{i=1}^{t-1} c_{t-1, i} d_t \quad\text{by the inductive hypothesis}\\ 
       & =  a_t d_t  + \sum_{i=1}^{t-1} c_{t-1, i} d_t + c_{t,t} d_t - c_{t,t} d_t \\ 
       & = (a_t- c_{t,t}) d_t  + \sum_{i=1}^{t-1} c_{t-1, i} d_t + c_{t,t} d_t \\
       & = e_t d_t + \sum_{i=1}^{t-1} c_{t-1, i} d_t + c_{t,t} d_t \\
           \end{split}
\end{equation}
We write $e_{t} = \sum_{i=1}^{t-1}e_{t,i-1}$ as in \Cref{cor: c<=b}; 
also note that $d_i>d_t$, for all $i<t$, implies that $e_{t,i} d_i \geq e_{t,i}d_t$ since $e_{t,i}\geq0$ by \Cref{lemma: non-neg seq}. 
Then, we have that 
\begin{equation}
    \begin{split}
      \sum_{i=1}^t a_i d_i & = (\sum_{i=1}^{t-1} e_{t,i}) d_t+ \sum_{i=1}^{t-1} c_{t-1, i} d_t + c_{t,t} d_t \\
       & \leq  (\sum_{i=1}^{t-1} e_{t,i} d_i) +  \sum_{i=1}^{t-1} c_{t-1, i} d_t + c_{t,t} d_t \\
       & = \sum_{i=1}^{t-1} (e_{t,i} + c_{t-1, i}) d_t + c_{t,t} d_t \\
       & = \sum_{i=1}^{t-1} c_{t, i} d_t + c_{t,t} d_t \\
       & = \sum_{i=1}^{t} c_{t, i} d_t 
    \end{split}
\end{equation}
as desired. Therefore, the inductive statement holds for case $t$. 

When $t = k$, by definition $\bmc = (c_i)_{i\in[k]}$ where $c_{i} = c_{k,i}$, rewrite $\sum_{i=1}^t a_i d_i \leq \sum_{i=1}^{t} c_{i} d_t $ as $\bma^\top\bmd\leq \bmc^\top\bmd,$ which was what we want.

Combine $\bma^\top\bmd\leq\bmc^\top\bmd$ and $\bmc^\top\bmd\leq\bmb^\top\bmd$, and we conclude that $\bma^\top\bmd\leq \bmb^\top\bmd$.

\thatway Now we prove the converse of the statement. We proceed with a proof by contrapositive. 
Let $\bma = (a_i), \bmb = (b_i)\in\zz^k$ and suppose $\bma\not\preceq\bmb$.
We will show there exists $\bmd\in\bmD$ such that 
$\bma^\top\bmd > \bmb^\top\bmd$.

Note that the statement $\bma\preceq\bmb$ is defined as, for all $t\in[k]$, we have that $\sum_{i=1}^ta_i\leq \sum_{i=1}^tb_i$; then, the negation of it, $\bma\not\preceq\bmb$, is that, exists $t\in[k],$ we have $\sum_{i=1}^ta_i>  \sum_{i=1}^tb_i$.
Suppose $t\in[k]$ is the minimal counterexample such that, for all $t'< t$, 
\begin{equation}\label{caseT_1}
    \sum_{i=0}^{t'}a_i\leq \sum_{i=0}^{t'}b_i
\end{equation}
and 
\begin{equation}\label{caseT}
    \sum_{i=1}^ta_i>\sum_{i=1}^tb_i.
\end{equation} 
Combining the two inequalities, we have
\begin{equation}\label{seq}
a_t -b_t>\sum_{i=1}^{t-1}b_i-\sum_{i=1}^{t-1}a_i\geq 0
\end{equation}

Divide both sides by $a_t-b_t$ (which is positive),
\begin{equation}
    1>\frac{\sum_{i=1}^{t-1}(b_i-a_i)  }{a_t-b_t}.
\end{equation}
Let $d_1 > d_t>0$ be so that 
\begin{equation}\label{equi:less than 1}
    1>\frac{\sum_{i=1}^{t-1}(b_i-a_i)  }{a_t-b_t}\cdot\frac{d_1}{d_t}> \frac{\sum_{i=1}^{t-1}(b_i-a_i)}{a_t-b_t}.
\end{equation}
\ie, pick a value for $d_t/d_1$ in the interval $( \sum_{i=1}^{t-1}(b_i-a_i)/(a_t-b_t), 1)$. 
Since, for all $\bmd\in\bmD$, $1 < i < t$ implies that $d_1 > d_i$, it follows that
\begin{equation}\label{Final_inequality}
\begin{split}
    \frac{\sum_{i=1}^{t-1}(b_i-a_i) }{a_t-b_t}\cdot\frac{d_1}{d_t}
     & = \sum_{i=1}^{t-1}\frac{(b_i-a_i)d_1}{(a_t-b_t)d_t}\\
     & \geq \sum_{i=1}^{t-1}\frac{(b_i-a_i)d_i}{(a_t-b_t)d_t}\\
\end{split}
\end{equation}

Since $a_t>b_t$ and $d_t>0$, multiplying both sides of \Cref{equi:less than 1} by $(a_t-b_t)d_t$ does not change the direction of the inequality. 
Combining \Cref{equi:less than 1} and (\ref{Final_inequality}), we have that 
$$(a_t-b_t)d_t > \sum_{i=1}^{t-1}(b_i-a_i)d_i.$$
It follows that $\sum_{i=1}^{t}a_id_i>\sum_{i=1}^{t}b_id_i$.
Let $\epsilon>0$ be so that $\sum_{i=1}^{t}a_id_i  = \sum_{i=1}^{t}b_id_i+\epsilon $. Then choose small enough $d_{t+1}>\dots> d_k$ so that $ \sum_{i=t+1}^k(b_i-a_i) d_i< \epsilon$ (\ie, use $d_i$ arbitrarily small so that $(b_i - a_i) d_i$ are small). 
Substitute it in the above equation so that $$\sum_{i=1}^{t}a_id_i >  \sum_{i=1}^{t}b_id_i + \sum_{i=t+1}^k(b_i-a_i) d_i$$ and so $ \sum_{i=1}^{k}a_id_i >  \sum_{i=1}^{k}b_id_i $ by moving terms, 
and thus $\bm a^\top \bm d> \bm b^\top \bm d$, which was what we want.
\end{proofof}

\end{document}